\newtheorem{theoremx}{Theorem}
\newtheorem{Theorem}{Theorem}[section]
\newtheorem{corollaryx}[theoremx]{Corollary}
\newtheorem{Potential Theorem}[Theorem]{Potential Theorem}
\newtheorem{Lemma}[Theorem]{Lemma}
\newtheorem{Corollary}[Theorem]{Corollary}
\newtheorem{Proposition}[Theorem]{Proposition}
\theoremstyle{definition}
\newtheorem{Example}[Theorem]{Example}
\newtheorem{Definition}[Theorem]{Definition}
\newtheorem{Notation}[Theorem]{Notation}
\theoremstyle{remark} 
\newtheorem{Remark}[Theorem]{Remark}
\DeclareMathOperator{\chara}{char}
\DeclareMathOperator{\rank}{rank}
\DeclareMathOperator{\frk}{frk}
\DeclareMathOperator{\s}{s}
\DeclareMathOperator{\mult}{mult}
\def\m{\mathfrak{m}}
\def\Z{\mathbb{Z}}
\def\R{\mathbb{R}}
\def\Q{\mathbb{Q}}
\def\N{\mathbb{N}}
\def\ds{\displaystyle}
\renewcommand{\geq}{\geqslant} 
\renewcommand{\leq}{\leqslant} 
\newcommand{\CC}{\mathbb{C}}
\newcommand{\ZZ}{\mathbb{Z}}
\newcommand{\ps}[1]{\llbracket{#1} \rrbracket}
\newcommand{\kk}{\Bbbk}
\begin{document}

\title{F-signature function of quotient singularities}
\author{Alessio Caminata}
\address{Institut de Matem\`{a}tica, Universitat de Barcelona \\ Gran Via de les Corts Catalanes 585, 08007 Barcelona, Spain}
\email{caminata@ub.edu}
\thanks{The first author is supported by European Union's Horizon 2020 research and innovation programme under grant agreement No 701807.}

\author{Alessandro De Stefani}
\address{Department of Mathematics, University of Nebraska, 203 Avery Hall, Lincoln, NE 68588}
\email{adestefani2@unl.edu}

\begin{abstract}
 We study the shape of the F-signature function of a $d$-dimensional quotient singularity $\kk\ps{x_1,\ldots,x_d}^G$, and we show that it is a quasi-polynomial.
We prove that the second coefficient is always zero and we describe the other coefficients in terms of invariants of the finite acting group $G\subseteq {\rm Gl}(d,\kk)$. When $G$ is cyclic, we obtain more specific formulas for the coefficients of the quasi-polynomial, which allow us to compute the general form of the function in several examples.
\end{abstract}

\maketitle
\section{Introduction}

Let $(R,\m,\kk)$ be a commutative complete Noetherian local domain of characteristic $p>0$, and assume that the residue field $\kk = R/\m$ is perfect. For a positive integer $e$, let $F^e:R \to R$ denote the $e$-th iterate of the Frobenius endomorphism on $R$. The map $F^e$ can be identified with the $R$-module inclusion $R \hookrightarrow R^{1/p^e}$, where $R^{1/p^e}$ is the ring obtained by adjoining $p^e$-th roots of elements in $R$. The main object of study of this article is the {\it F-signature function of $R$}, that is, the function
\[
\xymatrixrowsep{1mm}
\xymatrixcolsep{1mm}
\xymatrix{
FS:& \N \ar[rrr] &&& \N \\ 
&e \ar[rrr] &&& \frk_R(R^{1/p^e}),
}
\]
where $\frk_R(R^{1/p^e})$ denotes the maximal rank of a free $R$-summand of $R^{1/p^e}$ or, equivalently, the maximal rank of a free $R$-module $P$ for which there is a surjection $R^{1/p^e} \to P\to 0$. 

The F-signature function has been introduced by Smith and Van den Bergh, in the context of rings with finite F-representation type \cite{SmithVDB}. Even though this function has several interesting properties, most of the efforts have been devoted to studying its leading term, called the F-signature of $R$, and denoted $\s(R)$ (see Section \ref{Section_background} for more precise definitions). Despite being a coarser invariant, $\s(R)$ already encodes a significant amount of information about the ring and its singularities. For example, $R$ is regular if and only if $\s(R)=1$ \cite{HunekeLeuschke}, and $R$ is strongly F-regular if and only if $\s(R)>0$ \cite{AberbachLeuschke}. However, $\s(R)$ is typically very hard to compute explicitly, and it is known only in a few sporadic cases. Moreover, the techniques that allow to determine $\s(R)$ often do not allow to compute of the whole F-signature function. Therefore, even less is known about $FS(e)$, with a few very special exceptions (for instance, see \cite{Brinkmann}, or \cite[Example 7]{SinghFSignature}).

 Another function that can be defined in the same setup is the Hilbert-Kunz function $e \mapsto HK(e) = \ell_R(R/\m^{[p^e]})$, where $\ell_R$ denotes the length of an $R$-module, and $\m^{[p^e]}$ is the ideal generated by the elements $r^{p^e}$, for $r \in \m$.
The Hilbert-Kunz function was first investigated by Kunz in \cite{Kunz} and \cite{F-finExc}. In \cite{Monsky}, Monsky showed that $HK(e)=e_{HK}(R)p^{de}+O(p^{(d-1)e})$, where $e_{HK}(R)$ is a positive real number called Hilbert-Kunz multiplicity and $d$ is the Krull dimension of $R$. The main connection with the F-signature function can be best stated when $R$ is a Gorenstein singularity with minimal multiplicity, in which case $HK(e) = \ell_R(R/(x_1,\ldots,x_d))p^{de} - FS(e)$ for all $e\in\mathbb{N}$ \cite{HunekeLeuschke}. Here, $x_1,\ldots,x_d$ denotes any minimal reduction of the maximal ideal $\m$. In the case when $R$ is Gorenstein and minimal multiplicity, a knowledge of the function $HK(e)$ therefore leads to that of the F-signature function $FS(e)$. As the F-signature function, the Hilbert-Kunz function is also quite mysterious, and known only for very specific classes of rings. Among other results in this direction, see \cite{Brenner}, \cite{Brinkmann}, \cite{Kurano2}, \cite{HanMonsky}, \cite{F-finExc}, \cite{Kurano1}, \cite{MillerSwanson}, \cite{RobinsonSwanson}. 

In the effort of understanding the shape of the Hilbert-Kunz function, the question of whether there exists a ``second coefficient'' for $HK(e)$ has caught the attention of several researchers. One says that $HK(e)$ has a second coefficient if there exists $\beta\in\mathbb{R}$ such that $HK(e)=e_{HK}(R)p^{de}+\beta p^{(d-1)e}+O(p^{(d-2)e})$. Huneke, McDermott, and Monsky \cite{HunekeMcDermottMonsky} prove that, if $R$ is excellent, normal, and $F$-finite, then this is the case.
Chan and Kurano \cite{Kurano2} prove that the same result holds if one replaces normal with regular in codimension one.
Brenner \cite{Brenner} shows that, for standard graded normal domains of dimension two over an algebraically closed field, the second coefficient equals zero. In \cite{Kurano1}, Kurano proves that the same conclusion holds for F-finite $\Q$-Gorenstein local rings with algebraically closed residue field. 

For the F-signature function, it is known that $FS(e)=s(R)p^{de}+O(p^{(d-1)e})$ (see \cite{Tucker2012}, \cite{PolTuc}).
In their recent work, Polstra and Tucker ask whether a second coefficient for the F-signature function exists as well \cite[Question 7.4]{PolTuc}. We thank Polstra for pointing out to us that this is known to be true for some classes of rings, including rings that are $\Q$-Gorenstein on the punctured spectrum and affine semigroup rings, as a consequence of the existence of a second coefficient for Hilbert-Kunz functions with respect to $\m$-primary ideals \cite{HunekeMcDermottMonsky}. Using this approach, Brinkmann \cite{Brinkmann} computes the F-signature function of 2-dimensional ADE singularities, and shows that the second coefficient exists, and it is equal to zero.   
In this article, we prove that the same result holds for the larger class of $d$-dimensional quotient singularities (see Theorem \ref{theoremA}).

Throughout, $\kk$ denotes an algebraically closed field, and $G \subseteq \mathrm{Gl}(d,\kk)$ is a finite group, that acts linearly on $S=\kk\ps{x_1,\ldots,x_d}$. We assume that the characteristic of $\kk$ does not divide $|G|$, and we let $R=S^G$ be the ring of invariants under this action. We say that an element $g$ of $G$ is a $c$-pseudoreflection if, when viewed as an element of $\mathrm{Gl}(d,\kk)$, it has eigenvalue $1$ with multiplicity $c$, and $d-c$ eigenvalues different from $1$. In particular, the only $d$-pseudoreflection is the identity. In what follows, we assume that the group $G$ is small, that is, $G$ contains no $(d-1)$-pseudoreflections.

There is a known connection between the F-signature of $R$ and the acting group $G$: in our assumptions, $\s(R) = \frac{1}{|G|}$ \cite[Theorem 4.2]{WatanabeYoshida}. In fact, even deeper connections can be established for the generalized F-signature of certain modules \cite{HashimotoNakajima}, even in a more general setup \cite{HashimotoSymonds}. We further develop the relation established in \cite{WatanabeYoshida}, giving a description of the F-signature function of $R$ in terms of $c$-pseudoreflections.  

\begin{theoremx}[see Theorem \ref{theorem-Fsignaturefunctionquotient} and Proposition \ref{prop-Fsignaturefuncionquotient}] \label{theoremA} Let $\kk$ be an algebraically closed field of positive characteristic $p$, and $G$ be a finite small subgroup of $\mathrm{Gl}(d,\kk)$ such that $p\nmid |G|$. Let $S=\kk\llbracket x_1,\dots,x_d\rrbracket$ be a power series ring, and let $R=S^G$ be the ring of invariants of $S$ under the action of $G$. The F-signature function of $R$ is a quasi-polynomial in $p^e$:
\[
FS(e) = \varphi_dp^{de}+\varphi_{d-1}p^{(d-1)e}+\cdots+\varphi_{1}p^e+\varphi_{0}.
	\]
For $0 \leq c \leq d$, $\varphi_c=\varphi_{c}(e)$ is a function that takes values in $\Q$, is bounded, and periodic of period at most $|G|-1$. Moreover:
\begin{enumerate}
\item $\varphi_c$ is identically zero if and only if $G$ does not contain any $c$-pseudoreflections. 
\item  If $p^e \equiv 1$ modulo $|G|$, then $\varphi_c(e) = \frac{|G_c|}{|G|}$.
\end{enumerate}
In particular, we have that $\varphi_d(e)=\frac{1}{|G|}$, and $\varphi_{d-1}(e)=0$ for all $e\in\mathbb{N}$.
\end{theoremx}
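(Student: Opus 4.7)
The strategy is to reduce $FS(e)$ to a Molien-type character sum on a finite-dimensional $\kk$-vector space and then to read off the quasi-polynomial coefficients from the eigenvalue data of the elements of $G$. The only nontrivial input, which I plan to establish as a preliminary lemma via the Auslander correspondence for small quotient singularities, is the identity
\[
FS(e) \;=\; \dim_\kk \bigl(S^{1/p^e}/\m_S S^{1/p^e}\bigr)^G,
\]
where $\m_S=(x_1,\ldots,x_d)$. Heuristically this says that the splitting ideal of $R^{1/p^e}$ over $R$ coincides with $R^{1/p^e}\cap \m_S S^{1/p^e}$; it refines the formula Watanabe--Yoshida use to prove $\s(R)=1/|G|$, and exploits the fact that, for small $G$, the only indecomposable maximal Cohen--Macaulay summand of $S^{1/p^e}$ that is free over $R$ is the one corresponding to the trivial $G$-representation.

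Granted this reduction, what remains is linear algebra over $\kk$. Set $W_e:=S^{1/p^e}/\m_S S^{1/p^e}$; by the Molien formula,
\[
FS(e) \;=\; \frac{1}{|G|}\sum_{g\in G}\mathrm{tr}(g\mid W_e).
\]
For each $g\in G$, choose coordinates $y_1,\ldots,y_d$ of $S^{1/p^e}$ in which $g$ acts diagonally with eigenvalues $\omega_i(g)\in\kk$; since $p\nmid|G|$, these are the unique $p^e$-th roots of the eigenvalues $\zeta_i(g)$ of $g$ on $\m_S/\m_S^2$. Using the monomial basis $\{y^a:0\leq a_i<p^e\}$ of $W_e$ and summing geometric series,
\[
\mathrm{tr}(g\mid W_e) \;=\; \prod_{i\,:\,\zeta_i(g)=1} p^e \,\cdot\!\prod_{i\,:\,\zeta_i(g)\ne 1} \frac{1-\zeta_i(g)}{1-\omega_i(g)}.
\]
Grouping the sum over $g$ by $c(g):=\#\{i:\zeta_i(g)=1\}$ (the condition that $g$ be a $c$-pseudoreflection) produces the quasi-polynomial form of $FS(e)$ with
\[
\varphi_c(e) \;=\; \frac{1}{|G|}\sum_{g\,:\,c(g)=c}\ \prod_{i\,:\,\zeta_i(g)\ne 1} \frac{1-\zeta_i(g)}{1-\omega_i(g)}.
\]

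All remaining assertions fall out of this closed formula. The factor $(1-\zeta)/(1-\omega)$ depends on $p^e$ only through its residue modulo the order of $\zeta$, a divisor of $|G|$, so $\varphi_c$ is bounded and periodic in $e$ with period dividing the multiplicative order of $p$ modulo $|G|$, hence at most $|G|-1$. Rationality follows from a Galois-orbit argument: $\operatorname{Gal}(\Q(\mu_{|G|})/\Q)$ permutes the elements $g$ of fixed $c$-pseudoreflection type and leaves the inner sum invariant. The identity is the only term with $c(g)=d$ and contributes $p^{de}$, so $\varphi_d(e)=1/|G|$ for all $e$; smallness rules out $(d-1)$-pseudoreflections, so the sum defining $\varphi_{d-1}$ is empty and $\varphi_{d-1}\equiv 0$. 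When $p^e\equiv 1\pmod{|G|}$, raising to the $p^e$-th power is the identity on $\mu_{|G|}(\kk)$, so $\omega_i(g)=\zeta_i(g)$, each factor equals $1$, and $\varphi_c(e)=|G_c|/|G|$; this value is nonzero whenever $G_c\ne\emptyset$, giving the sharp characterisation of when $\varphi_c$ vanishes identically.

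\textbf{Main obstacle.} The delicate step is establishing $FS(e)=\dim_\kk W_e^G$: it amounts to identifying the splitting ideal of $R^{1/p^e}$ in the quotient singularity setting, and relies on the structure theorem for MCM $R$-modules via Auslander--Reiten for $R=S^G$. Once that is in hand, the rest is a direct and self-contained computation with traces of diagonalisable operators and roots of unity; a minor bookkeeping point is that the diagonalising coordinates $y_i$ depend on $g$, but traces are coordinate-invariant, so only the eigenvalues $\zeta_i(g)$ and $\omega_i(g)$ appear in the final formulas.
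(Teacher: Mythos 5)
Your plan is essentially the paper's own strategy, so I will mainly flag two points: one of framing, one that is a genuine (though repairable) gap.

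On framing: the identity you call the ``main obstacle,'' namely $FS(e)=\dim_\kk\bigl(S^{1/p^e}/\m_S S^{1/p^e}\bigr)^G$, is not something you need to re-derive from Auslander--Reiten theory. As a $G$-representation, $S^{1/p^e}/\m_S S^{1/p^e}$ is exactly the Frobenius twist $(S/\m_S^{[p^e]})^{1/p^e}$, and the identity you want is precisely the $\alpha=0$ case of the Smith--Van den Bergh theorem (Theorem~\ref{theorem-SmithVandenBerg2}), which states $\mult(M_\alpha,R^{1/p^e})=\mult\bigl(V_\alpha,(S/\m^{[p^e]})^{1/p^e}\bigr)$ for each irreducible $V_\alpha$. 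The paper simply cites this and proceeds; you would not gain anything by reproving it. The rest of your computation (diagonalise $g$, sum geometric series monomial by monomial, group by $c$-pseudoreflection type, evaluate at $p^e\equiv 1\bmod|G|$) is exactly Lemma~\ref{lemma-F-signaturefunction} together with the proof of Theorem~\ref{theorem-Fsignaturefunctionquotient} and Proposition~\ref{prop-Fsignaturefuncionquotient}. Your Galois-orbit argument for rationality is a legitimate alternative to the paper's argument (integrality of the values plus periodicity, via a Vandermonde-type system), and it is worth noting that the needed fact -- that $g\mapsto g^m$ preserves each $G_c$ when $\gcd(m,|G|)=1$ -- holds because $\lambda^m=1\iff\lambda=1$ for $\lambda\in\mu_{|G|}(\kk)$.

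The genuine gap is in the trace computation. You write $\mathrm{tr}(g\mid W_e)$ with eigenvalues $\omega_i(g)\in\kk$ and then declare it equals $\prod_{\zeta_i=1}p^e\cdot\prod_{\zeta_i\ne 1}\frac{1-\zeta_i}{1-\omega_i}$, but over $\kk$ the scalar $p^e$ is $0$, and Molien's formula over $\kk$ only computes $\dim_\kk W_e^G$ modulo $p$ -- useless here. You must lift to characteristic $0$ \emph{before} summing the geometric series, and for this the paper fixes once and for all an isomorphism $\phi\colon\mu_{|G|}(\kk)\to\mu_{|G|}(\CC)$ and works with the Brauer character $\chi_{W_e}(g)=\sum_i\phi(\text{eigenvalues})$ rather than the $\kk$-trace. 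This is not a cosmetic move: it is what makes $p^e$ appear as the integer $p^e$ in the factor corresponding to $\zeta_i=1$, and what makes the Galois argument you invoke (which lives over $\Q(\mu_{|G|})\subset\CC$) meaningful. Your phrase ``traces of diagonalisable operators and roots of unity'' elides this; once you insert the isomorphism $\phi$ and restate the trace as the Brauer character $\sum_{0\le a_i<p^e}\phi(\omega_1(g))^{a_1}\cdots\phi(\omega_d(g))^{a_d}$, the rest of your argument goes through and coincides with the paper's.
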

We remark that, when $G$ is Abelian, the fact that $FS(e)$ is a quasi-polynomial with rational coefficients can be deduced from \cite{Bruns} and \cite{VonKorff}, since in this case $R$ is toric.

As quotient singularities have finite F-representation type \cite{SmithVDB}, our methods actually yield more general formulas for the multiplicities $\mult(M_\alpha,R^{1/p^e})$, where the modules $M_\alpha$ run over the irreducible $R$-modules that appear in a direct sum decomposition of $R^{1/p^e}$, for $e\in\mathbb{N}$. The multiplicity functions include the F-signature function, since $M_0=R$, and thus $FS(e) = \mult(M_0,R^{1/p^e})$. In analogy with $FS(e)$, the aforementioned generalized F-signature of a module $M_\alpha$ is the leading coefficient $\varphi_d^{(\alpha)}$ of the quasi-polynomial $\mult(M_\alpha,R^{1/p^e})$. In this sense, Theorem \ref{theorem-Fsignaturefunctionquotient} generalizes the main result of \cite{HashimotoNakajima}.

In the second part of the article, we focus on the case when the group $G$ is cyclic of order $n$. Viewing a generator $g \in G$ as an element of $\mathrm{Gl}(d,\kk)$, one can assume that $g$ is represented by a diagonal matrix, with $n$-th roots of unity on the diagonal. We can then associate to $g$ a $d$-uple $(t_1,\ldots,t_d)$ that records the multiplicative order of the elements on the diagonal. For every $J \subseteq \{1,\ldots,d\}$, we set $g_J$ to be the greatest common divisor of $n$, together with the integers $\{t_j \ : \ j \in J\}$. For instance, $g_{\{1,\ldots,d\}} = \gcd(t_1,\ldots,t_d,n)$, while $g_{\{1\}} = \gcd(t_1,n)$. 

Our second main result is a formula to explicitly compute the F-signature function of $R$ in terms of the integers $g_J$, and functions $e \mapsto \theta_J(e)$ which count the number of solutions of certain congruences (see Notation \ref{notation_psi} for more details). Let $\Gamma_i$ be the set of subsets of $\{1,2,\ldots,d\}$ of cardinality $i$, and set $\psi_i = \sum_{J \in \Gamma_i} g_J\theta_J$. The functions $\psi_i = \psi_i(e)$ are also bounded and periodic, of period at most $|G|-1$.

\begin{theoremx}[see Theorems \ref{theorem-Fsignaturecyclic} and \ref{theorem-peggiodelcasomonomiale}] \label{theoremB}
In the setup of Theorem \ref{theoremA}, assume further that $G$ is cyclic of order $n$. For all $e\in\mathbb{N}$, write $p^e=kn+r_e$, where $0 < r_e < n$. With the notation introduced above, the functions $\varphi_c$ can then be expressed as
\[
\ds  \varphi_c(e) = \frac{1}{n} \left[\sum_{i=c}^d (-1)^{i-c}{i \choose c}\psi_i(e)r_e^{i-c}\right].
\]
\end{theoremx}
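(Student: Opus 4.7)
The plan is to compute $FS(e)$ from scratch in the cyclic case, exploiting the very explicit structure of $S^{1/p^e}$ as an $R$-module. Since the action of $G = \langle g \rangle$ on $S$ is diagonal, each basis monomial $\mathbf{x}^{\mathbf{b}/p^e}$ of $S^{1/p^e}$ over $S$ (for $0 \leq b_j < p^e$) is a $G$-eigenvector of weight $\chi_\mathbf{b} = \sum_j b_j a_j \pmod{n}$, where $g = \mathrm{diag}(\zeta^{a_1}, \ldots, \zeta^{a_d})$ for a primitive $n$-th root of unity $\zeta$. Combining the weight decomposition $S = \bigoplus_\alpha M_\alpha$ into modules of semi-invariants with $R^{1/p^e} = (S^{1/p^e})^G$ (valid since $p \nmid n$, so Maschke applies) gives an $R$-module isomorphism
\[
R^{1/p^e} \cong \bigoplus_{\mathbf{b} \in [0, p^e)^d} M_{-\chi_\mathbf{b}}.
\]
Since $G$ is small, the $M_\alpha$ for $\alpha \neq 0$ are indecomposable non-free, whereas $M_0 = R$. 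Thus $FS(e)$ equals the cardinality of $\{\mathbf{b} \in [0, p^e)^d : \sum_j b_j a_j \equiv 0 \pmod{n}\}$.

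The next step is to repackage this count in terms of $r_e$ and $k = (p^e - r_e)/n$. Writing $b_j = n k_j + s_j$ with $s_j \in [0, n)$, the congruence depends only on $\mathbf{s}$, and the number of admissible $\mathbf{k}$ for a fixed $\mathbf{s}$ is $\prod_j (k + [s_j < r_e])$. Expanding this product yields
\[
FS(e) = \sum_{J \subseteq \{1,\ldots,d\}} k^{d-|J|} A_J(e),
\]
where $A_J(e)$ counts tuples $\mathbf{s} \in [0, n)^d$ satisfying the congruence together with $s_j < r_e$ for $j \in J$. Fixing $(s_j)_{j \in J}$ and letting $(s_j)_{j \in J^c}$ vary, the sum $\sum_{j \in J^c} s_j a_j$ covers uniformly the cyclic subgroup $H_{J^c} = \langle a_j : j \in J^c \rangle \subseteq \mathbb{Z}/n\mathbb{Z}$ with fibers of size $n^{|J^c|}/|H_{J^c}|$; hence $A_J(e)$ factors as a power of $n$ times the number of $(s_j)_{j \in J} \in [0, r_e)^{|J|}$ with $\sum_{j \in J} s_j a_j \in H_{J^c}$. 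Expressing $|H_{J^c}|$ through the orders $t_j$ and matching the restricted tuple count with the paper's $g_J$ and $\theta_J$ of Notation \ref{notation_psi} puts $A_J(e)$ in the required normalization.

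Finally, substituting $kn = p^e - r_e$ into $FS(e) = \sum_J k^{d-|J|} A_J(e)$, applying the binomial theorem to $(p^e - r_e)^{d-|J|}$, and collecting the coefficient of $p^{ec}$ after re-indexing the outer sum by $i = |J^c|$ (or $|J|$, depending on the convention used for $\theta_J$) recovers the closed form in the statement. Boundedness and periodicity of $\varphi_c$ with period at most $|G| - 1$ are then manifest, since the whole expression depends on $e$ only through $r_e = p^e \bmod n$, which takes at most $n-1$ distinct values. The main obstacle is the careful bookkeeping required to align the natural subgroup-theoretic invariants appearing above ($|H_{J^c}|$ and the raw restricted count) with the specific normalizations of $g_J$ and $\theta_J$ prescribed in Notation \ref{notation_psi}; once this identification is made, the rest is a routine binomial expansion.
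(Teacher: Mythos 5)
Your proof is correct and reaches the same endpoint as the paper's, but the route to the intermediate lattice-point count and the way that count is decomposed are genuinely more streamlined, and worth comparing. The paper first deploys Brauer characters and the Smith--Van den Bergh formula (Lemma~\ref{lemma-F-signaturefunction}) to convert $\mult(M_\alpha,R^{1/p^e})$ into $|(p^e-1)\mathcal{P}\cap\mathcal{A}^{(\alpha)}|$ (Proposition~\ref{prop-Fsignatureofcyclic}), then partitions the cube explicitly into shifted boxes $C_J+nv_{J,\underline{s}}$ (Lemma~\ref{partition}), proves shift-invariance of the count in each box, and invokes Lemma~\ref{idolo}. You bypass the character-theoretic detour by directly exploiting that $S^{1/p^e}$ is a free $S$-module with a $G$-eigenbasis of monomials, so $R^{1/p^e}=(S^{1/p^e})^G$ decomposes immediately into semi-invariant pieces and the free-rank is the count of monomials of trivial weight; this is more elementary, at the cost of being specific to the cyclic diagonal situation. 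You then replace the explicit box-partition by the substitution $b_j=nk_j+s_j$ and the expansion of $\prod_j(k+[s_j<r_e])$; under the complementary indexing $J\leftrightarrow J^c$ this is exactly the paper's partition of $[0,p^e-1]^d$, but the algebraic identity makes the bookkeeping cleaner. The subgroup/fiber count you use for $A_J(e)$ is equivalent to Lemma~\ref{idolo}, and the final binomial expansion coincides.

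Two small points to keep straight. First, the $G$-weight of $\mathbf{x}^{\mathbf{b}/p^e}$ in the Frobenius twist is actually $p^{-e}\sum_j b_j a_j\pmod n$, not $\sum_j b_j a_j$: since $p\nmid n$ the map $x\mapsto x^{1/p^e}$ on $\mu_n(\kk)$ twists the eigenvalue by the unit $p^{-e}\bmod n$. This is harmless for $\alpha=0$ (the twist is a bijection of $\mathbb{Z}/n$), which is all Theorem~B needs, but if you intended your argument to also cover the $\varphi_c^{(\alpha)}$ with $\alpha\neq 0$ (as the paper's Theorem~\ref{theorem-Fsignaturecyclic} does), you would need to track that twist in the indexing of the $M_\alpha$. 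Second, your conclusion that the free rank equals the count of trivial-weight monomials still quietly uses Auslander's correspondence, both to know the modules of semi-invariants $M_\alpha$ with $\alpha\neq0$ are indecomposable and non-free, and to ensure no ``accidental'' free summands hide inside a direct sum of non-free $M_\alpha$'s; you acknowledge the first point but should make sure the smallness hypothesis on $G$ is explicitly what licenses it.
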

As for Theorem \ref{theoremA}, also the formulas of Theorem \ref{theoremB} can be generalized to similar formulas for the functions $\varphi_c^{(\alpha)}(e)$, which are the coefficients of the multiplicity functions $\mult(M_\alpha,R^{1/p^e})$ (see Theorem \ref{theorem-Fsignaturecyclic}).

As a direct consequence of Theorem \ref{theoremB}, we obtain an explicit description of the F-signature function of Veronese rings up to a bounded periodic function $\theta_\emptyset$, defined in Notation \ref{notation_psi}. Recall that the (complete) $d$-dimensional Veronese ring of order $n$ over a field $\kk$ is the ring $R=\kk\ps{x_1,\ldots,x_d}^G$, where $G=\ZZ/(n)$, and a generator $g \in G$ is identified with the matrix ${\rm diag}(\lambda,\ldots,\lambda) \in {\rm Gl}(d,\kk)$, where $\lambda$ is a primitive $n$-th root of unity in $\kk$. Alternatively, $R$ can be viewed as the completion at the irrelevant maximal ideal of the $\kk$-subalgebra of $\kk[x_1,\ldots,x_d]$ generated by the monomials of degree $n$ in the variables $x_1,\ldots,x_d$.
\begin{corollaryx}[see Corollary \ref{coroll_gcd1}]
Let $R$ be a $d$-dimensional Veronese ring of order $n$ over an algebraically closed field of characteristic $p>0$. For $e\in\mathbb{N}$, write $p^e=kn + r_e$. The F-signature function of $R$ is
\[
\ds FS(e) = \frac{p^{de}-r_e^d}{n} + \theta_\emptyset,
\]
where $\theta_\emptyset$ is the number of integral $d$-uples $(a_1,\ldots,a_d)$, contained inside the $d$-dimensional cube $[0,r_e-1]^d$, that satisfy $a_1+\ldots + a_d \equiv 0$ modulo $n$.  In particular, if $r_e=1$, then
\[
\ds FS(e) = \frac{p^{de}-1}{n} + 1.
\]
\end{corollaryx}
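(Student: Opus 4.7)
The plan is to derive the corollary as a specialization of Theorems \ref{theoremA} and \ref{theoremB}. Since the Veronese group $G$ is generated by $g = \mathrm{diag}(\lambda, \ldots, \lambda)$ with $\lambda$ a primitive $n$-th root of unity, every nonidentity element $\lambda^k \cdot I$ is a $0$-pseudoreflection (no eigenvalue equal to $1$), while the identity is the sole $d$-pseudoreflection. In particular $G$ contains no $c$-pseudoreflections for $1 \le c \le d-1$, so Theorem \ref{theoremA}(1) forces $\varphi_c(e) \equiv 0$ in this range. Combined with $\varphi_d(e) = 1/n$, this gives
$$FS(e) = \frac{p^{de}}{n} + \varphi_0(e),$$
reducing the problem to the computation of $\varphi_0$.

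The associated tuple of orders is $(t_1, \ldots, t_d) = (n, \ldots, n)$, so $g_J = n$ for every $J \subseteq \{1, \ldots, d\}$ (including $J = \emptyset$). To apply Theorem \ref{theoremB}, observe that the vanishing identities $\varphi_c(e) = 0$ for $1 \le c \le d - 1$, together with $\psi_d(e) = n\varphi_d(e) = 1$, form an upper-triangular linear system in the unknowns $\psi_1, \ldots, \psi_d$; a short inductive check using $\binom{i}{c}\binom{d}{i} = \binom{d}{c}\binom{d-c}{i-c}$ and $(1-1)^{d-c} = 0$ for $c < d$ verifies that
$$\psi_i(e) = \binom{d}{i}\, r_e^{d-i}, \qquad i = 1, \ldots, d,$$
is the unique solution. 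The remaining coefficient is $\psi_0(e) = g_\emptyset\, \theta_\emptyset(e) = n\, \theta_\emptyset(e)$, and unpacking Notation \ref{notation_psi} in the Veronese case identifies $\theta_\emptyset(e)$ with the count of lattice points $(a_1,\ldots,a_d) \in [0, r_e-1]^d$ satisfying $a_1 + \cdots + a_d \equiv 0 \pmod n$.

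Substituting into Theorem \ref{theoremB}'s formula for $\varphi_0$ and collapsing the sum via the binomial theorem,
$$\varphi_0(e) = \frac{1}{n}\sum_{i=0}^d (-1)^i \psi_i(e)\, r_e^i = \theta_\emptyset(e) + \frac{r_e^d}{n}\bigl[(1-1)^d - 1\bigr] = \theta_\emptyset(e) - \frac{r_e^d}{n},$$
which yields $FS(e) = (p^{de} - r_e^d)/n + \theta_\emptyset(e)$. For the special case $r_e = 1$, the cube $[0,0]^d$ reduces to $(0,\ldots,0)$, whose coordinate sum is $0 \equiv 0 \pmod n$, so $\theta_\emptyset(e) = 1$ and $FS(e) = (p^{de} - 1)/n + 1$. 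The main technical step is the middle one, determining $\psi_i$ for $i \ge 1$ from the vanishing of the intermediate $\varphi_c$ via upper-triangular inversion; once that is in hand, everything else is a clean binomial simplification.
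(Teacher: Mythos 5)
Your argument is correct, but it contains a harmless notational misreading that is worth flagging, and it takes a more circuitous route than the paper's own.

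On the misreading: you take $(t_1,\ldots,t_d)=(n,\ldots,n)$ and conclude $g_J = n$ for all $J$. In the paper's actual setup (Section 4, and consistently in Notation \ref{notation_psi}), the $t_i$ are \emph{exponents} of the generator $g=\mathrm{diag}(\lambda^{t_1},\ldots,\lambda^{t_d})$, normalized so that $\gcd(t_1,\ldots,t_d,n)=1$; for the Veronese ring this gives $t_i=1$ for all $i$, so $g_J=1$ for every nonempty $J$ and $g_\emptyset=n$. Your reading --- likely induced by the introduction's loose phrase ``records the multiplicative order'' --- would give $g_J=n$ for $|J|=d-1$, contradicting smallness and the normalization above, and would break the formulas of Section 4. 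Fortunately this error never enters your computation: you only invoke $g_\emptyset=n$, which is correct by definition, and you obtain the values $g_J\theta_J=r_e^{d-i}$ for $i\geq 1$ indirectly rather than from $g_J$.

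On the route: the paper reaches this corollary by first noting that $g_J=1$ for all nonempty $J$ forces $\theta_J(e)=r_e^{d-|J|}$ (Remark \ref{rem_psi}), hence $\psi_i=\binom{d}{i}r_e^{d-i}$ for $i\geq 1$, and then directly applies Theorem \ref{theorem-peggiodelcasomonomiale} (equivalently Corollary \ref{coroll_gcd1}) with $\ell=0$ to get $\varphi_0 = \bigl(-r_e^d + \psi_0\bigr)/n$. You instead recover the same $\psi_i$ by observing that the vanishing of $\varphi_1,\ldots,\varphi_{d-1}$ (from Theorem A(1) and the absence of $c$-pseudoreflections) plus $\psi_d=1$ uniquely determines $\psi_1,\ldots,\psi_d$ through the triangular system of Theorem B. This is valid, and the binomial verification via $\binom{i}{c}\binom{d}{i}=\binom{d}{c}\binom{d-c}{i-c}$ is sound, but it amounts to reproving a piece of Theorem \ref{theorem-peggiodelcasomonomiale} from scratch. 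The paper's direct route is shorter precisely because it reads off $\theta_J$ from $g_J=1$ rather than solving for $\psi_i$. Both arrive at $\varphi_0 = \theta_\emptyset - r_e^d/n$ and the stated formula; the $r_e=1$ specialization is handled correctly.
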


This paper is structured as follows: in Section \ref{Section_background}, we recall the main definitions and results concerning the F-signature function and Auslander's correspondence, that we use extensively throughout the article. In Section \ref{Section_quotient_sing}, we study the F-signature function of quotient singularities, and prove Theorem \ref{theoremA}. In Section \ref{section:cyclicquotient} we focus on the cyclic case to obtain Theorem \ref{theoremB}, and deduce a formula for Veronese rings. Finally, in Section \ref{section:examples} we provide several examples, to explicitly illustrate how Theorem \ref{theoremA} and Theorem \ref{theoremB} allow to compute the F-signature function of some specific quotient singularities.

\section{Background} \label{Section_background}
\subsection{F-signature function}
Let $R$ be a commutative Noetherian ring of prime characteristic $p>0$. For a positive integer $e$, let $F^e:R \to R$ denote the $e$-th iterate of the Frobenius endomorphism on $R$, that is, the map that raises every element of $R$ to its $p^e$-th power. Given a finitely generated $R$-module $M$, we denote by  $^e\!M$ the module $M$, whose $R$-module structure is pulled back via $F^e$. More explicitly, for $^e\!m_1,^e\!m_2 \in \!^e\!M$ and $r \in R$ we have
\[
\ds ^e\!m_1+\!^e\!m_2 = \!^e\!(m_1+m_2) \ \ \mbox{ and } \ \ r \cdot \!^e\!m_1 = \!^e\!(r^{p^e}m_1).
\]
When the ring $R$ is reduced, the Frobenius endomorphism $F^e:R\rightarrow\ \! R$ can be identified with the natural inclusion $R\hookrightarrow R^{1/p^e}$, where $R^{1/p^e}$ is the ring obtained by adding $p^e$-th roots of elements in $R$. In particular, $^e\!R$ can be identified with $R^{1/p^e}$.

Throughout, we assume that $(R,\m,\kk)$ is a complete local domain with perfect residue field.  Let $K$ be the fraction field of $R$. By the rank of a finitely generated $R$-module $M$, we mean the dimension of the $K$-vector space $M \otimes_R K$. We let $\frk_R(R^{1/p^e})$ denote the maximal rank of a free $R$-module $P$ for which there is a surjection $R^{1/p^e} \to P \to 0$. 

We now introduce the main object of study of this article. 

\begin{Definition}[Smith-Van den Bergh, Huneke-Leuschke] \label{Defn_FSig}
Let $(R,\m,\kk)$ be a complete local domain with perfect residue field.
The {\it F-signature function of $R$} is defined as
\[
\xymatrixrowsep{1mm}
\xymatrixcolsep{1mm}
\xymatrix{
FS:& \N \ar[rrr] &&& \N \\ 
&e \ar[rrr] &&& \frk_R(R^{1/p^e}).
}
\]
\end{Definition}

The F-signature function has been introduced and first studied by Smith and Van den Bergh, with main focus on rings with finite F-representation type \cite{SmithVDB}. See the end of the section for a more precise definition. Successively, in \cite{HunekeLeuschke}, Huneke and Leuschke focused on an asymptotic normalized version of this function: they defined the F-signature of $R$ as the limit $\s(R) = \lim_{e \to \infty} \frac{FS(e)}{p^{de}}$, where $d$ is the Krull dimension of $R$. It is easy to see that $0 \leq \s(R) \leq 1$ always holds, but the convergence of such a limit is far from trivial. The existence of the F-signature in full generality was a major open problem, until Tucker gave a proof in  \cite{Tucker2012}. In joint work with Polstra and Yao, the second author generalized the definition of F-signature to a more general setup, where the ring does not need to be local \cite{DSPY}.

\begin{Remark} In Definition \ref{Defn_FSig}, the assumption that $R$ is a complete domain and $\kk$ is perfect can be greatly weakened. However, the type of rings we will investigate in this article are of this form. Therefore, we do not provide the most general definition here. 
\end{Remark} 

\par Let $(R,\mathfrak{m},\kk)$ be a complete Noetherian local ring with perfect residue field. The category of finitely generated $R$-modules satisfies the Krull-Remak-Schmidt property. It follows that every finitely generated $R$-module can be uniquely decomposed (up to isomorphism) as a direct sum of indecomposable finitely generated $R$-modules.
In our running assumptions, $R$ is F-finite. This means that, for each $e\in\mathbb{N}$, the module $R^{1/p^e}$ is finitely generated, hence a direct sum of finitely generated indecomposable $R$-modules. We say that $R$ has \emph{finite F-representation type} (FFRT for short) if there exists a finite set $\mathcal{N}$ of indecomposable $R$-modules such that for every $e\in\mathbb{N}$ the $R$-module $R^{1/p^e}$ is isomorphic to a direct sum of elements of $\mathcal{N}$. 
In other words, if $R$ has FFRT and $\mathcal{N}=\{M_0=R,M_1,\dots,M_r\}$, then for all $e\in\mathbb{N}$ we can write 
\[
R^{1/p^e}\cong M_0^{c_{0,e}}\oplus M_1^{c_{1,e}}\cdots\oplus M_r^{c_{r,e}}
\]
for some uniquely determined integers $c_{0,e},\ldots,c_{r,e}$.
\begin{Notation}
For $\alpha\in\{0,\dots,r\}$, we denote $\mult(M_{\alpha},R^{1/p^e})=c_{\alpha,e}$ and we call it \emph{the multiplicity of $M_{\alpha}$ inside $R^{1/p^e}$}.	
In particular for $\alpha=0$, we have $M_0=R$ and the function $\mult(R,R^{1/p^e})=\frk_R(R^{1/p^e})=FS(e)$ is  the F-signature function of $R$.
\end{Notation}

\par The notion of FFRT and the functions $e\mapsto\mult(M_{\alpha},R^{1/p^e})$ were introduced by Smith and Van den Bergh \cite{SmithVDB}. They proved that if $R$ is strongly F-regular then the limit 
\[
\lim_{e \to \infty} \frac{\mult(M_{\alpha},R^{1/p^e})}{p^{de}}
\]
exists, and is strictly positive. Around the same time, Seibert \cite{Seibert} studied a similar problem and proved the existence of the previous limit, assuming that $R$ has finite Cohen-Macaulay type. 

\par As already pointed out for the F-signature function, in this article we are interested in studying the functions $\mult(M_{\alpha},R^{1/p^e})$, rather than the asymptotic behavior of \ $\frac{\mult(M_\alpha, R^{1/p^e})}{p^{de}}$.

\subsection{Non-modular representation theory in positive characteristic}
In this subsection, we recall some basic definitions and results on non-modular representation theory in positive characteristic.
\par We fix an algebraically closed field $\kk$ of positive characteristic $p$ and a finite subgroup $G\subseteq\mathrm{Gl}(d,\kk)$ such that $p\nmid |G|$.
When we say that $(V,\rho)$ is a $\kk$-representation of $G$, we will always mean a finite-dimensional  $\kk$-linear representation of $G$, i.e., a group homomorphism $\rho:G\rightarrow\mathrm{Gl}(V)$, where $V$ is a finite-dimensional $\kk$-vector space.
By abuse of notation, we will sometimes just call $V$ the representation, meaning that a map $\rho$ is given as well. The dimension of the representation is just the $\kk$-dimension of $V$.
Thanks to Mashke's theorem, the category of $\kk$-representations of $G$ has the Krull-Remak-Schmidt property, with the indecomposable objects being the irreducible representations. In other words, any representation $V$ can be uniquely decomposed (up to isomorphism)  as a direct sum of irreducible representations:
\begin{equation*}
V\cong V_0^{c_0}\oplus\cdots\oplus V_r^{c_r},
\end{equation*}
where $V_0,\dots,V_r$ are pairwise non-isomorphic irreducible representations.
\begin{Notation}
The natural number $c_i$ is called the \textit{multiplicity of} $V_i$ inside $V$, and we denote it by $\mult(V_i,V)=c_i$.
We will use the notation $V_0$ to denote the trivial representation of $G$ given by $g\in G\mapsto 1\in \mathrm{Gl}(1,\kk)=\kk^*$.	
\end{Notation}
Finally, we recall that the number of non-isomorphic irreducible $\kk$-representations of $G$ is finite and equal to the number of conjugacy classes of $G$.
 
\begin{Definition}[Frobenius twist]	Let $\kk$ be a perfect field, and $V$ be a $\kk$-vector space. For any positive integer $e$, we denote by $V^{1/p^e}=\{v^{1/p^e}: \ v\in V\}$ the $\kk^{1/p^e} = \kk$-vector space with sum and scalar multiplication given by
\[
\ds v_1^{1/p^e} + v_2^{1/p^e} = (v_1+v_2)^{1/p^e}, \mbox{ and} \ a\cdot v_1^{1/p^e} =(a^{p^e}v_1)^{1/p^e}
\]
for $a\in \kk$ and $v_1^{1/p^e}, v_2^{1/p^e} \in V^{1/p^e}$.  
If $V$ is a $\kk$-representation of a group $G$, then the composition $G\hookrightarrow \mathrm{Gl}(V)\xrightarrow{\Phi}\mathrm{Gl}(V^{1/p^e})$ shows that $V^{1/p^e}$ is also a representation of $G$, where $\Phi$ is given by $\Phi(g)(v^{1/p^e})=\ (gv)^{1/p^e}$, for $g\in G$, $v\in V$. We call this representation the \emph{$e$-th Frobenius twist} of $V$.
\end{Definition}	

\begin{Remark}
	Let $v_1,\dots,v_s$ be a basis of $V$, and assume that the representation $V$ of $G$ is given by a matrix $(f_{i,j}(g))$, where $f_{i,j}(g) \in \kk$ for all $g \in G$. Explicitly, this means that for $g \in G$ we have $g\cdot v_j=\sum_{i=1}^sf_{i,j}(g)v_j$.
	Since $\kk$ is algebraically closed, the elements $v_1^{1/p^e},\dots,v_s^{1/p^e}$ form a $\kk$-basis of $V^{1/p^e}$, and the matrix representation of the Frobenius twist $V^{1/p^e}$ is given by $\left(f_{i,j}(g)^{1/p^e}\right)$.
\end{Remark}
\begin{Remark} \label{Remark_order_roots}
Observe that, if $(f_{i,j}(g))$ is in diagonal form, then every element $f_{i,i}(g)$ that appears on the main diagonal is a primitive $m$-th root of unity in $\kk$, where $m$ divides the order of $g$ in $G$. Since $\kk$ is algebraically closed, and $p$ does not divide $m$, the map $(-)^{1/p^e}:\mu_m(\kk)\rightarrow\mu_m(\kk)$ is an isomorphism of groups, where $\mu_m(\kk)$ denotes the group of $m$-th roots of unity in $\kk$. In particular, $f_{i,i}(g)^{1/p^e}$ is also a primitive $m$-th root of unity in $\kk$.
\end{Remark}
\par We fix an isomorphism $\phi:\mu_{|G|}(\kk)\rightarrow\mu_{|G|}(\mathbb{C})$ between the groups of $|G|$-th roots of unity in $\kk$ and $|G|$-roots of unity in $\mathbb{C}$.
Let $(V,\rho)$ be a $\kk$-representation of $G$ of dimension $s\geq1$ and let $g$ be an element of $G$. Since $G$ is finite and $\kk$ is algebraically closed, the matrix $\rho(g)$ is diagonalizable in $\kk$. We denote by $\lambda_1,\dots,\lambda_s$ the eigenvalues of $\rho(g)$, counted with multiplicity. Observe that since $\mathrm{ord}_G(g)$ divides $|G|$, $\lambda_1,\dots,\lambda_s$ are elements of $\mu_{|G|}(\kk)$.

\begin{Definition}
	The \emph{Brauer character} or simply the \emph{character} of $(V,\rho)$ is the function $\chi_V:G\rightarrow\mathbb{C}$ given by $\chi_{V}(g)=\phi(\lambda_1)+\cdots+\phi(\lambda_s)$. 
\end{Definition}
We collect some properties of Brauer characters in the following proposition.

\begin{Proposition}\label{propertiesofcharacterprop}
	Let $V$ be a $\kk$-representations of $G$ with character $\chi_V$, and let $V_i$ be an irreducible $\kk$-representation of $G$ with character $\chi_{V_i}$.
	Then the following facts hold:
	\begin{enumerate}
		\item $\chi_V(\mathrm{Id}_G)=\dim_{\kk}V$, where $\mathrm{Id}_G$ is the identity of $G$;
		\item $\chi_V(g^{-1})=\overline{\chi_V(g)}$, the complex conjugate of $\chi_V(g)$, for every $g\in G$;
		\item The multiplicity of $V_i$ in $V$ is given by
		\[\mult(V_i,V)=\frac{1}{|G|}\sum_{g\in G}\overline{\chi_{V_i}(g)}\cdot\chi_V(g),
		\] where $\overline{\chi_{V_i}(g)}$ is the complex conjugate of $\chi_{V_i}(g)$.
	\end{enumerate} 
\end{Proposition}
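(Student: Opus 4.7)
The plan is to verify the three claims in order. For (1), note that $\rho(\mathrm{Id}_G)$ is the identity transformation on $V$, so its eigenvalues are $\lambda_1 = \cdots = \lambda_s = 1 \in \mu_{|G|}(\kk)$, where $s = \dim_\kk V$. Since $\phi$ is a group isomorphism, $\phi(1) = 1 \in \mathbb{C}$, and hence $\chi_V(\mathrm{Id}_G) = s = \dim_\kk V$.

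For (2), observe that $\rho(g^{-1}) = \rho(g)^{-1}$, so if $\lambda_1, \ldots, \lambda_s$ are the eigenvalues of $\rho(g)$ (counted with multiplicity), the eigenvalues of $\rho(g^{-1})$ are $\lambda_1^{-1}, \ldots, \lambda_s^{-1}$. Because $\phi \colon \mu_{|G|}(\kk) \to \mu_{|G|}(\mathbb{C})$ is a group homomorphism, $\phi(\lambda_i^{-1}) = \phi(\lambda_i)^{-1}$. Finally, since $\phi(\lambda_i)$ is a root of unity in $\mathbb{C}$, its inverse equals its complex conjugate, so
\[
\chi_V(g^{-1}) = \sum_{i=1}^s \phi(\lambda_i)^{-1} = \sum_{i=1}^s \overline{\phi(\lambda_i)} = \overline{\chi_V(g)}.
\]

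Claim (3) is the orthogonality-of-characters formula, and this is the substantive part. The plan is to mimic the classical argument from ordinary character theory, which is available here because the hypothesis $p \nmid |G|$ puts us in the non-modular setting where Maschke's theorem holds. First, by character additivity (immediate from block-diagonalizing $\rho$ on a decomposition), writing $V \cong \bigoplus_j V_j^{c_j}$ yields $\chi_V = \sum_j c_j \chi_{V_j}$, so it suffices to establish the orthogonality relations $\langle \chi_{V_i}, \chi_{V_j} \rangle = \delta_{ij}$, where $\langle \chi, \psi \rangle := \frac{1}{|G|}\sum_{g \in G} \overline{\chi(g)} \psi(g)$. The standard route is: given any $\kk$-linear map $A \colon V_j \to V_i$, form the averaged map $\widetilde{A} := \frac{1}{|G|}\sum_g \rho_i(g)^{-1} A \rho_j(g)$, which is $G$-equivariant. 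Schur's lemma (applicable because $\kk$ is algebraically closed) forces $\widetilde A$ to be zero when $i \neq j$ and a scalar when $i = j$, and taking traces on both sides yields the orthogonality formula, provided that the trace behaves as expected under $\phi$. Since eigenvalues multiply entry by entry in tensor products, and $\phi$ is a multiplicative bijection on roots of unity, we have $\chi_{V_i^\vee \otimes V_j}(g) = \overline{\chi_{V_i}(g)}\,\chi_{V_j}(g)$, which is what ties the Brauer character inner product to the trace computation. The resulting identity
\[
\mult(V_i, V) = \langle \chi_{V_i}, \chi_V \rangle = \frac{1}{|G|}\sum_{g \in G} \overline{\chi_{V_i}(g)}\,\chi_V(g)
\]
is exactly (3).

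The main obstacle is verifying that the classical Schur--orthogonality argument really transports to Brauer characters; the point is that, because $p \nmid |G|$, every eigenvalue appearing is of order coprime to $p$, so $\phi$ intertwines the relevant multiplicative structure faithfully and the trace-of-a-scalar-map computation is unaffected by the switch from $\kk$ to $\mathbb{C}$. Alternatively, one can sidestep this bookkeeping by appealing to the fact that in the non-modular case every irreducible $\kk$-representation lifts uniquely to an irreducible representation over a characteristic-zero ring whose residue field is $\kk$, and under this lift the Brauer character values correspond under $\phi$ to the ordinary character values; the orthogonality relations then follow directly from the classical ones, and we would simply cite a standard reference on Brauer characters.
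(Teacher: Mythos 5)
The paper does not supply a proof of Proposition \ref{propertiesofcharacterprop}: it is stated as a collection of standard facts about Brauer characters in the non-modular setting (\(p\nmid|G|\)), and the cited authority is implicitly the standard literature. So there is no ``paper's own proof'' to compare against; what can be assessed is whether your proof is correct.

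Parts (1) and (2) are clean and correct as written. For (3), the structure of your argument is sound: you reduce to the orthogonality relations \(\langle\chi_{V_i},\chi_{V_j}\rangle=\delta_{ij}\) via additivity of Brauer characters on direct sums, which is fine because a block-diagonal matrix has as its eigenvalue multiset the union of the blocks' eigenvalue multisets, and \(\phi\) is applied eigenvalue by eigenvalue. The issue you yourself flag, though, is real and should not be glossed over: the naive Schur averaging argument produces an identity in \(\kk\) (it computes \(\operatorname{tr}_\kk\) of the averaged map), and \(\phi\) is a multiplicative bijection on roots of unity, not an additive map on \(\kk\). Since Brauer character values are sums taken in \(\mathbb{C}\) after applying \(\phi\) to individual eigenvalues, an identity among traces in \(\kk\) does not automatically translate into the desired identity in \(\mathbb{C}\) --- indeed a \(\kk\)-valued trace only sees information modulo \(p\). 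So ``taking traces on both sides yields the orthogonality formula, provided that the trace behaves as expected under \(\phi\)'' is precisely where a gap remains if one insists on the direct route.

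Your fallback --- lifting to a characteristic-zero complete DVR \(\mathcal{O}\) with residue field \(\kk\), using that \(p\nmid|G|\) makes \(\mathcal{O}G\) a maximal order so that every simple \(\kk G\)-module lifts uniquely to an \(\mathcal{O}G\)-lattice, and observing that the Brauer character of the reduction equals (via \(\phi\)) the ordinary character of the lift --- is the correct and standard way to close that gap, and once you are in characteristic zero the classical orthogonality relations give (3) immediately. In effect, you should promote this lifting argument from an ``alternative'' to the actual proof, and treat the direct Schur sketch as motivation rather than as a self-contained argument. With that understanding, the proof is correct; as the paper gives none, there is no discrepancy to report.
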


\par We conclude with the following well-known definition.
\begin{Definition}\label{Def-pseudoreflection}
	An element $g\in G\subseteq\mathrm{Gl}(d,\kk)$ is called a \emph{pseudoreflection} if the fixed subspace $\{v\in \kk^d: gv=v\}$ has dimension $d-1$. The group $G$ is called \emph{small} if it does not contain any pseudoreflections.
\end{Definition}

We observe that, since $G$ is finite and $\kk$ algebraically closed, then $g\in G$ is a pseudo-reflection if and only if it has an eigenvalue $1$ of multiplicity $d-1$ and another eigenvalue $\lambda\neq1$ of multiplicity $1$.

\section{F-signature function of quotient singularities} \label{Section_quotient_sing}

Let $\kk$ be an algebraically closed field of positive characteristic $p$, and let $G$ be a finite small subgroup of $\mathrm{Gl}(d,\kk)$ such that $p\nmid |G|$.
We consider a power series ring $S=\kk\llbracket x_1,\dots,x_d\rrbracket$ over $\kk$. 
The group $G$ acts linearly on $S$ with the action on the variables $x_1,\dots,x_d$ given by matrix multiplication. This defines a unique $\kk$-representation of $G$ of dimension $d$, which is called \emph{fundamental representation} of $G$. We denote by $R=S^G$ the ring of invariants under this action. This is a $d$-dimensional complete normal domain, and it is called a \emph{quotient singularity}. ADE singularities are $2$-dimensional quotient singularities where $G\subseteq\mathrm{Sl}(2,\kk)$; see Examples \ref{Ex_E6}, and \ref{Ex_A_{n-1}} for some explicit rings of this form.

\par Smith and Van den Bergh showed that quotient singularities have FFRT. More precisely, let $V_0,\dots,V_r$ be a complete set of non-isomorphic irreducible representations of $G$ and let $M_{\alpha}=(S\otimes_{\kk}V_{\alpha})^G$ for $\alpha=0,\dots,r$. In \cite{SmithVDB}, they prove that $R$ has FFRT by the set $\mathcal{N}=\{M_0,\dots,M_r\}$, that is, for every $e\in\mathbb{N}$ the $R$-module $R^{1/p^e}$ is isomorphic to a finite direct sum of elements of $\mathcal{N}$.  
$R$-modules of the form $M=(S\otimes_{\kk}W)^G$, where $W$ is a (not necessarily irreducible) representation of $G$, are called \textit{modules of covariants}.
Direct sums of modules of covariants are still modules of covariants, therefore by Smith and Van den Bergh's result, $R^{1/p^e}$ is a module of covariants as well. We are interested in its decomposition into irreducible modules.

\begin{Remark} The functor $W\mapsto(S\otimes_{\kk}W)^G$, which sends a $\kk$-representation $W$ of $G$ into the corresponding module of covariants, is called \textit{Auslander correspondence}. This gives a one to one correspondence between irreducible $\kk$-representations of $G$ and indecomposable $R$-direct summands of $S$. Moreover, one has $\dim_{\kk}W=\rank_R(S\otimes_{\kk}W)^G$ (see \cite{Auslander} for the original proof in dimension $2$ or \cite[Chapter 5]{LeWi} for a generalization to arbitrary dimension).
\end{Remark}

\begin{Theorem}[Smith-Van den Bergh]\label{theorem-SmithVandenBerg2}
	For any $e\in\mathbb{N}$, let $(S/\m^{[p^e]})^{1/p^e}$ be the Frobenius twist of the representation $S/\m^{[p^e]}$. Then
	\begin{equation*}                                                   
	R^{1/p^e}\cong\left(S\otimes_{\kk}\left((S/\m^{[p^e]})^{1/p^e}\right)\right)^G.     \end{equation*}                                                                    Moreover, if $V_{\alpha}$ is an irreducible $\kk$-representation of $G$ and $M_{\alpha}=(S\otimes_{\kk}V_{\alpha})^G$ is the corresponding module of covariants, then 
	\begin{equation*}
	\mult(M_{\alpha},R^{1/p^e})=\mult(V_{\alpha},(S/\m^{[p^e]})^{1/p^e}).
	\end{equation*}
\end{Theorem}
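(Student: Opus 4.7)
The plan is to produce a $G$-equivariant $S$-module isomorphism
\[
\Phi\colon S\otimes_\kk (S/\m^{[p^e]})^{1/p^e}\xrightarrow{\sim} S^{1/p^e},
\]
and then take $G$-invariants on both sides; the multiplicity statement will follow by combining the Krull-Remak-Schmidt property with the Auslander correspondence.

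First I would identify $S^{1/p^e}/\m S^{1/p^e}$ with the Frobenius twist $(S/\m^{[p^e]})^{1/p^e}$ as $\kk[G]$-modules. Note that $\m S^{1/p^e}$ is the $\kk$-span of those monomials $x_1^{a_1/p^e}\cdots x_d^{a_d/p^e}$ for which some $a_i\geq p^e$, so the quotient has $\kk$-basis $\{x^{a/p^e}:0\leq a_i<p^e\}$. Using the freshman's dream in characteristic $p$ to evaluate $g\cdot x^{a/p^e}=(g(x^a))^{1/p^e}$, one checks that if the matrix of the $G$-action on $S/\m^{[p^e]}$ in the basis $\{x^a:0\leq a_i<p^e\}$ is $\bigl(c_{b,a}(g)\bigr)$, then the matrix of the induced $G$-action on $S^{1/p^e}/\m S^{1/p^e}$ in the basis $\{x^{a/p^e}:0\leq a_i<p^e\}$ is $\bigl(c_{b,a}(g)^{1/p^e}\bigr)$. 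By the very definition of the Frobenius twist, this identifies the quotient with $(S/\m^{[p^e]})^{1/p^e}$.

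Next, since $p\nmid |G|$, Maschke's theorem applies. The natural projection $\pi\colon S^{1/p^e}\twoheadrightarrow S^{1/p^e}/\m S^{1/p^e}$ is $\kk[G]$-equivariant, so averaging any $\kk$-linear section over $G$ produces a $\kk[G]$-linear section $\sigma\colon (S/\m^{[p^e]})^{1/p^e}\hookrightarrow S^{1/p^e}$. Extending $\sigma$ by $S$-linearity yields an $S$-linear, $G$-equivariant map $\Phi$ whose reduction modulo $\m$ is the identity. Both source and target are free $S$-modules of rank $p^{de}$ (for $S^{1/p^e}$ this is Kunz's theorem), so Nakayama's lemma together with a rank comparison forces $\Phi$ to be an $S$-module isomorphism, which is already $G$-equivariant by construction.

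Taking $G$-invariants yields the first assertion $R^{1/p^e}\cong(S\otimes_\kk(S/\m^{[p^e]})^{1/p^e})^G$. For the multiplicity formula I would decompose $(S/\m^{[p^e]})^{1/p^e}\cong\bigoplus_\alpha V_\alpha^{c_\alpha}$ with $c_\alpha=\mult(V_\alpha,(S/\m^{[p^e]})^{1/p^e})$; since the functor $W\mapsto(S\otimes_\kk W)^G$ commutes with direct sums, this gives $R^{1/p^e}\cong\bigoplus_\alpha M_\alpha^{c_\alpha}$. The Auslander correspondence guarantees that the $M_\alpha$ are pairwise non-isomorphic indecomposable $R$-modules, so the Krull-Remak-Schmidt property of finitely generated $R$-modules forces $\mult(M_\alpha,R^{1/p^e})=c_\alpha$. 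I expect the main technical point to be the $G$-equivariance of $\Phi$: the obvious $\kk$-subspace of $S^{1/p^e}$ spanned by the monomials $x^{a/p^e}$ is \emph{not} stable under $G$, so one genuinely needs the Maschke-style averaging step, which is precisely where the hypothesis $p\nmid |G|$ plays its essential role.
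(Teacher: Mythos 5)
The paper does not actually give a proof of Theorem~\ref{theorem-SmithVandenBerg2}; it is stated with a citation to Smith--Van den Bergh \cite{SmithVDB}. Your argument is a correct, self-contained proof, and it follows the same strategy as the original: identify $S^{1/p^e}/\m S^{1/p^e}$ with the Frobenius twist $(S/\m^{[p^e]})^{1/p^e}$ as $\kk[G]$-modules, produce a $G$-equivariant, $S$-linear splitting of the projection $S^{1/p^e}\twoheadrightarrow S^{1/p^e}/\m S^{1/p^e}$, and upgrade the resulting map $\Phi\colon S\otimes_\kk(S/\m^{[p^e]})^{1/p^e}\to S^{1/p^e}$ to an isomorphism by Nakayama plus a rank comparison (Kunz's theorem supplying freeness of $S^{1/p^e}$ over $S$). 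You have also correctly identified the genuine subtlety: the $\kk$-span of the monomials $x^{a/p^e}$ with $0\le a_i<p^e$ inside $S^{1/p^e}$ is \emph{not} $G$-stable when $G$ does not act monomially, so the Maschke-style averaging step is indispensable, and this is exactly where $p\nmid|G|$ enters. One small point worth making explicit is that $R^{1/p^e}=(S^{1/p^e})^G$, which follows since $(-)^{1/p^e}\colon S\to S^{1/p^e}$ is a $G$-equivariant ring isomorphism; with that observed, taking $G$-invariants of $\Phi$ gives the first claim, and the multiplicity formula then follows from additivity of $W\mapsto(S\otimes_\kk W)^G$, the Auslander correspondence, and Krull--Remak--Schmidt exactly as you write.
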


\begin{Remark}
	Notice that, if $V_0$ is the trivial representation, then $M_0=(S\otimes_{\kk}V_0)^G=R$ and therefore  $\mult(V_{0},(S/\m^{[p^e]})^{1/p^e})=\mult(R,R^{1/p^e})=\frk_R(R^{1/p^e})=FS(e)$ is the F-signature function of $R$.
\end{Remark}

\par Hashimoto and Nakajima \cite{HashimotoNakajima} computed the limits
\[
\lim_{e \to \infty} \frac{\mult(M_{\alpha},R^{1/p^e})}{p^{de}} = \frac{\rank_RM_{\alpha}}{|G|}.
\]
The existence of the previous limits is also a consequence of \cite{SmithVDB, Seibert}, and the value for $\alpha=0$, i.e., the F-signature $\s(R)$, had been previously computed by Watanabe and Yoshida \cite{WatanabeYoshida}.
However, not much is known about the functions $e\mapsto\mult(M_{\alpha},R^{1/p^e})$.
The main result of this section is Theorem \ref{theorem-Fsignaturefunctionquotient}, where we prove that $\mult(M_{\alpha},R^{1/p^e})$ is a quasi-polynomial in $p^e$ and the coefficient of $p^{(d-1)e}$ is always $0$.
Before stating our result, we need the following lemma, which is implicit in \cite{HashimotoNakajima}. Since the methods employed will be useful, we present a complete proof here.
\begin{Lemma}\label{lemma-F-signaturefunction}
Let $G \subseteq {\rm Gl}(d,\kk)$ be as above. For each $g\in G$, we denote by $\lambda_{g,1},\dots,\lambda_{g,d}\in \kk$ its eigenvalues, counted with multiplicity.
	Let $V_{\alpha}$ be an irreducible $\kk$-representation of $G$ with Brauer character $\chi_{V_{\alpha}}$ and associated $R$-module of covariants $M_{\alpha}=(S\otimes_{\kk}V_{\alpha})^G$. The multiplicity of $M_{\alpha}$ into $R^{1/p^e}$ can be expressed as 
	\[
	\mult(M_{\alpha}, R^{1/p^e})=\frac{1}{|G|}\sum_{g\in G}\overline{\chi_{V_{\alpha}}(g)}\sum_{(a_1,\ldots,a_d) \in ([0,p^e-1]\cap\N)^d}\phi\left(((\lambda_{g,1})^{1/p^e})^{a_1}\cdots((\lambda_{g,d})^{1/p^e})^{a_d}\right).
	\]
\end{Lemma}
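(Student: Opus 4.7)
The plan is to combine the Smith--Van den Bergh identification with the Brauer character formula, after computing the character of $(S/\m^{[p^e]})^{1/p^e}$ by explicitly diagonalizing the $G$-action on a monomial basis.

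First I would apply Theorem \ref{theorem-SmithVandenBerg2} to rewrite
\[
\mult(M_\alpha, R^{1/p^e}) = \mult(V_\alpha, (S/\m^{[p^e]})^{1/p^e}),
\]
and then invoke part (3) of Proposition \ref{propertiesofcharacterprop} to get
\[
\mult(M_\alpha, R^{1/p^e}) = \frac{1}{|G|}\sum_{g\in G}\overline{\chi_{V_\alpha}(g)}\,\chi_{(S/\m^{[p^e]})^{1/p^e}}(g).
\]
So the whole task reduces to showing that for each $g \in G$,
\[
\chi_{(S/\m^{[p^e]})^{1/p^e}}(g) = \sum_{(a_1,\ldots,a_d)\in ([0,p^e-1]\cap\N)^d}\phi\Bigl(((\lambda_{g,1})^{1/p^e})^{a_1}\cdots((\lambda_{g,d})^{1/p^e})^{a_d}\Bigr).
\]

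To compute $\chi_{S/\m^{[p^e]}}(g)$ I would fix $g$ and diagonalize. Since $\mathrm{ord}_G(g)$ divides $|G|$, which is coprime to $p$, and $\kk$ is algebraically closed, the matrix of $g$ acting on the fundamental representation $V = \bigoplus_i \kk x_i$ is diagonalizable: there exists a basis $y_1,\ldots,y_d$ of $V$ with $g\cdot y_i = \lambda_{g,i} y_i$. The crucial (and somewhat subtle) point is that $\m^{[p^e]}$ does not depend on which linear basis we use: if $y_i = \sum_j c_{ij} x_j$, then in characteristic $p$,
\[
y_i^{p^e} = \sum_j c_{ij}^{p^e} x_j^{p^e},
\]
so $(y_1^{p^e},\ldots,y_d^{p^e}) = (x_1^{p^e},\ldots,x_d^{p^e}) = \m^{[p^e]}$. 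Consequently the residue classes of the monomials $y_1^{a_1}\cdots y_d^{a_d}$ with $0\leq a_i\leq p^e-1$ form a $\kk$-basis of $S/\m^{[p^e]}$, each one an eigenvector of $g$ with eigenvalue $\lambda_{g,1}^{a_1}\cdots\lambda_{g,d}^{a_d}$. Summing the images under $\phi$ over all such $(a_1,\ldots,a_d)$ yields the Brauer character of $S/\m^{[p^e]}$ at $g$.

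Finally I would pass to the Frobenius twist. By Remark \ref{Remark_order_roots}, the matrix representing $g$ on $(S/\m^{[p^e]})^{1/p^e}$ with respect to the basis $\{(y_1^{a_1}\cdots y_d^{a_d})^{1/p^e}\}$ is the entrywise $p^e$-th root of the previous diagonal matrix; in particular the eigenvalue on the basis element indexed by $(a_1,\ldots,a_d)$ becomes $((\lambda_{g,1})^{1/p^e})^{a_1}\cdots((\lambda_{g,d})^{1/p^e})^{a_d}$. Summing $\phi$ of these eigenvalues and substituting into the character formula produces exactly the stated expression. The main conceptual hurdle is the basis-independence of $\m^{[p^e]}$ (which makes the diagonalization compatible with the quotient); everything else is bookkeeping with characters and the Frobenius twist.
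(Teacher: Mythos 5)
Your proof is correct and follows essentially the same route as the paper: Smith--Van den Bergh (Theorem \ref{theorem-SmithVandenBerg2}), the Brauer character inner-product formula (Proposition \ref{propertiesofcharacterprop}(3)), diagonalizing $g$ on the fundamental representation, and reading off eigenvalues on the monomial basis of $S/\m^{[p^e]}$ and its Frobenius twist. The one place you are slightly more careful than the paper is in spelling out why $\m^{[p^e]}$ is still generated by the $p^e$-th powers of the diagonalizing basis $y_1,\dots,y_d$ (Frobenius-linearity), a point the paper treats as implicit in ``we may assume without loss of generality.''
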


\begin{proof}
By Theorem \ref{theorem-SmithVandenBerg2}, the multiplicity $\mult(M_{\alpha}, R^{1/p^e})$ is equal to the multiplicity of the representation $V_{\alpha}$ into the Frobenius twist representation $ (S/\m^{[p^e]})^{1/p^e}$.
By Proposition \ref{propertiesofcharacterprop}, this is equal to
\[
\mult(V_{\alpha},(S/\m^{[p^e]})^{1/p^e}) = \frac{1}{|G|}\sum_{g\in G}\overline{\chi_{V_{\alpha}}(g)}\cdot\chi_{(S/\m^{[p^e]})^{1/p^e}}(g).
\]
To compute the previous sum, we fix  an element $g$ of $G$. We may assume without loss of generality that the $\kk$-basis 
 ${x_1,\dots,x_d}$ of the fundamental representation is such that each $x_i$ is an eigenvector of $g$ with eigenvalue $\lambda_{g,i}\in \kk$, that is, $gx_i=\lambda_{g,i}x_i$.
\par Now, observe that $\{x_1^{a_1}\cdots x_d^{a_d}: (a_1,\ldots,a_d) \in ([0,p^e-1]\cap\N)^d\}$ is a $\kk$-basis of $S/\m^{[p^e]}$, 
where each element $x_1^{a_1}\cdots x_d^{a_d}$  is an eigenvector of $g$ with eigenvalue $\lambda_{g,1}^{a_1}\cdots \lambda_{g,d}^{a_d}$.
It follows that $\{(x_1^{1/p^e})^{a_1}\cdots (x_d^{1/p^e})^{a_d} :  (a_1,\ldots,a_d) \in ([0,p^e-1]\cap\N)^d\}$ is a basis of  the Frobenius twist $(S/\m^{[p^e]})^{1/p^e}$ as a $\kk^{1/p^e}$-vector space. Since $\kk$ is perfect, it is a $\kk$-basis as well.
Moreover, each element $(x_1^{1/p^e})^{a_1}\cdots (x_d^{1/p^e})^{a_d}$ of the previous basis is an eigenvector of $g$ with eigenvalue $(\lambda_{g,1}^{1/p^e})^{a_1}\cdots (\lambda_{g,d}^{1/p^e})^{a_d}$. Thus, the character of $(S/\m^{[p^e]})^{1/p^e}$ is given by
\[
\chi_{(S/\m^{[p^e]})^{1/p^e}}(g)=\sum_{(a_1,\ldots,a_d) \in ([0,p^e-1]\cap\N)^d}\phi\left(((\lambda_{g,1})^{1/p^e})^{a_1}\cdots((\lambda_{g,d})^{1/p^e})^{a_d}\right),
\]
and the claim is proved.
\end{proof}

\begin{Definition}
 Let $c\in\{0,\dots,d\}$ and let $g$ be an element of $G \subseteq {\rm Gl}(d,\kk)$. We say that $g$ is a \emph{$c$-pseudoreflection} if it has eigenvalue $1$ with multiplicity $c$, and $d-c$ eigenvalues different from $1$. Equivalently, a $c$-pseudoreflection is an element $g \in {\rm GL}(d,\kk)$ such that $\rank(I_d-g) = d-c$, where $I_d$ is the identity matrix of size $d$. We denote by $G_c$ the subset of $G$ consisting of all $c$-pseudoreflections.
\end{Definition}

Note that, since $G\subseteq \mathrm{Gl}(d,\kk)$ is a finite group whose order is invertible in $\kk$, and $\kk$ is algebraically closed, each element of $G$ is diagonalizable. 
Moreover, observe that we can decompose $G$ as a disjoint union of the sets $G_c$.

\begin{Example}
The only $d$-pseudoreflection corresponds to the identity of the group, and a $(d-1)$-pseudoreflection is just a (standard) pseudoreflection, as in Definition \ref{Def-pseudoreflection}.
\end{Example}

\begin{Remark}
In the literature, $c$-pseudoreflections are sometimes called $(d-c)$-reflections. In particular, (standard) pseudoreflections are sometimes called $1$-reflections, rather than $(d-1)$-pseudoreflections. We decided to adopt this convention in order to facilitate the readability of this article. 
\end{Remark}

\begin{Theorem}\label{theorem-Fsignaturefunctionquotient}
Let $\kk$ be an algebraically closed field of positive characteristic $p$, and let $G$ be a finite small subgroup of $\mathrm{Gl}(d,\kk)$ such that $p\nmid |G|$. Let $S=\kk\llbracket x_1,\dots,x_d\rrbracket$ be a power series ring, and $R=S^G$ be the ring of invariants under this action. Let $V_{\alpha}$ be an irreducible $\kk$-representation of $G$, and $M_{\alpha}=(S\otimes_{\kk}V_{\alpha})^G$ be the corresponding indecomposable module of covariants. Then, the function $e\mapsto \mult(M_{\alpha},R^{1/p^e})$ has the following shape 
\[
\mult(M_{\alpha},R^{1/p^e})=\frac{\rank_RM_{\alpha}}{|G|}p^{de}+\varphi_{d-2}^{(\alpha)}p^{(d-2)e}+\cdots+\varphi_{1}^{(\alpha)}p^e+\varphi_{0}^{(\alpha)},
	\]
where $\varphi_{c}^{(\alpha)} = \varphi_{c}^{(\alpha)}(e)$ are functions that take values in $\Q$, are bounded, and periodic of period at most $|G|-1$. Moreover, if $G$ does not contain any $c$-pseudoreflections for some  $c\in\{0,\dots,d-2\}$, then $\varphi_{c}^{(\alpha)}(e) = 0$. 
\end{Theorem}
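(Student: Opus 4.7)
The plan is to build on Lemma \ref{lemma-F-signaturefunction} by exploiting that the inner sum factors across eigenvalues. Fix $g\in G$ and set $\mu_{g,i}:=\phi\bigl((\lambda_{g,i})^{1/p^e}\bigr)$; by Remark \ref{Remark_order_roots} this is a root of unity in $\mathbb{C}$ of order dividing $|G|$. The sum over $(a_1,\ldots,a_d)\in([0,p^e-1]\cap\N)^d$ splits as the product $\prod_{i=1}^d\sum_{a=0}^{p^e-1}\mu_{g,i}^{a}$. Each factor equals $p^e$ when $\lambda_{g,i}=1$, and otherwise telescopes to
\[
\frac{\mu_{g,i}^{p^e}-1}{\mu_{g,i}-1}=\frac{\phi(\lambda_{g,i})-1}{\mu_{g,i}-1},
\]
using that $\phi$ is a group isomorphism and $\mu_{g,i}^{p^e}=\phi(\lambda_{g,i})$. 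In the second case this value is bounded independently of $e$.

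Next I would partition $G=\bigsqcup_{c=0}^{d}G_c$ by pseudoreflection type. An element $g\in G_c$ has exactly $c$ eigenvalues equal to $1$, so it contributes a factor $p^{ce}$ multiplied by a bounded quantity. Collecting terms according to the power of $p^e$ produces
\[
\mult(M_{\alpha},R^{1/p^e})=\sum_{c=0}^d\varphi_c^{(\alpha)}(e)\,p^{ce},\qquad \varphi_c^{(\alpha)}(e)=\frac{1}{|G|}\sum_{g\in G_c}\overline{\chi_{V_{\alpha}}(g)}\prod_{\lambda_{g,i}\neq 1}\frac{\phi(\lambda_{g,i})-1}{\mu_{g,i}-1}.
\]
It is then immediate that $\varphi_c^{(\alpha)}\equiv 0$ whenever $G_c=\emptyset$. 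The smallness hypothesis forces $G_{d-1}=\emptyset$, killing the coefficient of $p^{(d-1)e}$; and since the identity is the unique $d$-pseudoreflection one recovers the leading coefficient $\frac{\dim_{\kk}V_\alpha}{|G|}=\frac{\rank_R M_\alpha}{|G|}$ via Proposition \ref{propertiesofcharacterprop}(1).

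For boundedness and periodicity, observe that $\mu_{g,i}$ depends on $e$ only through $p^e$ modulo the order of $\lambda_{g,i}$, which divides $|G|$; hence each $\varphi_c^{(\alpha)}(e)$ is a function of $p^e\bmod|G|$. Because $\gcd(p,|G|)=1$, the sequence $e\mapsto p^e\bmod|G|$ is purely periodic with period equal to the multiplicative order $T$ of $p$ in $(\mathbb{Z}/|G|\mathbb{Z})^*$, which divides $\phi(|G|)\leq|G|-1$. The point I expect to be the main obstacle is rationality: a priori the formula only gives values in $\mathbb{Q}(\zeta_{|G|})$. To upgrade this to $\mathbb{Q}$, fix a residue $r$ modulo $T$ and set $\Phi_c:=\varphi_c^{(\alpha)}(r)p^{cr}$, so that
\[
\mult(M_{\alpha},R^{1/p^{r+jT}})=\sum_{c=0}^{d}\Phi_c (p^{cT})^j\in\Z\quad\text{for every } j\ge 0.
\]
Since the bases $1,p^T,p^{2T},\ldots,p^{dT}$ are pairwise distinct positive integers, the Vandermonde system associated to any $d+1$ consecutive values of $j$ is invertible over $\Q$, and Cramer's rule forces each $\Phi_c\in\Q$, hence $\varphi_c^{(\alpha)}(r)\in\Q$. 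Everything else reduces to direct manipulation of the formula from Lemma \ref{lemma-F-signaturefunction}.
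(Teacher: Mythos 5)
Your proof is correct and follows essentially the same path as the paper: starting from the character-theoretic formula of Lemma \ref{lemma-F-signaturefunction}, factoring the inner sum across eigenvalues, partitioning $G=\bigsqcup_{c}G_c$ by pseudoreflection type, and reading off the coefficient of $p^{ce}$, with the closed form $\frac{\phi(\lambda_{g,i})-1}{\mu_{g,i}-1}$ replacing the paper's slightly less explicit boundedness observation. The one place you add substantive detail is the rationality step, where you make explicit the Vandermonde argument (after establishing periodicity, which this argument genuinely requires) that the paper compresses into the brief assertion that integrality of $\mult(M_\alpha,R^{1/p^e})$ for all $e$ forces the $\varphi_c^{(\alpha)}$ to take rational values.
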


\begin{proof}
We fix $e \in \N$. By Lemma \ref{lemma-F-signaturefunction} we can write the multiplicity of $M_{\alpha}$ in $R^{1/p^e}$ as
\[
\mult(M_{\alpha},R^{1/p^e})=\frac{1}{|G|}\sum_{g\in G}\overline{\chi_{\alpha}(g)}\sum_{(a_1,\ldots,a_d) \in ([0,p^e-1]\cap\N)^d}\phi\left(((\lambda_{g,1})^{1/p^e})^{a_1}\cdots((\lambda_{g,d})^{1/p^e})^{a_d}\right),
\]
where $\lambda_{g,1},\dots,\lambda_{g,d}$ are the eigenvalues of the element $g\in G$, and $\chi_{\alpha}$ is the character of $V_{\alpha}$.

\par We write the previous sum as 
\[\begin{split}
&\frac{1}{|G|}\sum_{g\in G}\overline{\chi_{\alpha}(g)}\sum_{(a_1,\ldots,a_d) \in ([0,p^e-1]\cap\N)^d}(\phi((\lambda_{g,1})^{1/p^e}))^{a_1}\cdots(\phi((\lambda_{g,d})^{1/p^e}))^{a_d}\\
=&\frac{1}{|G|}\sum_{g\in G}\overline{\chi_{\alpha}(g)}\sum_{(a_1,\ldots,a_d) \in ([0,p^e-1]\cap\N)^d}(\xi_{g,e,1})^{a_1}\cdots(\xi_{g,e,d})^{a_d},
\end{split}
\]
where $\xi_{g,e,i}=\phi((\lambda_{g,i})^{1/p^e})\in\mathbb{C}$ for all $i=1,\dots,d$.
Notice that since $\phi:\mu_{|G|}(\kk)\rightarrow\mu_{|G|}(\mathbb{C})$ and $(-)^{1/p^e}:\mu_{|G|}(\kk)\rightarrow\mu_{|G|}(\kk)$ are group isomorphisms, the order of $\xi_{g,e,i}$ as root of unity in $\mathbb{C}$ is the same as the order of $\lambda_{g,i}$ in $\kk$.
\par Now, rewrite the sum as 
\begin{equation}\label{eq_F-signature1}\begin{split}
&\frac{1}{|G|}\sum_{g\in G}\overline{\chi_{\alpha}(g)}\sum_{a_1=0}^{p^e-1}(\xi_{g,e,1})^{a_1}\cdots\sum_{a_d=0}^{p^e-1}(\xi_{g,e,d})^{a_d}\\
=&\frac{1}{|G|}\sum_{g\in G}\overline{\chi_{\alpha}(g)}\prod_{i=1}^d\sum_{a_i=0}^{p^e-1}(\xi_{g,e,i})^{a_i}\\
=&\frac{1}{|G|}\sum_{c=0}^d\sum_{g\in G_c}\overline{\chi_{\alpha}(g)}\prod_{i=1}^d\sum_{a_i=0}^{p^e-1}(\xi_{g,e,i})^{a_i}.
\end{split}
\end{equation}
The last equality follows from the disjoint decomposition $G=\bigsqcup_{c=0}^dG_c$, where $G_c$ is the set of $c$-pseudoreflections.

\par We analyze the last formula more closely. First, observe that each sum of the form $\sum_{a_i=0}^{p^e-1}(\xi_{g,e,i})^{a_i}$ is equal to $p^e$ if $\lambda_{g,i}=1$. In fact, in this case, $\xi_{g,e,i}=1$ for all $e$. On the other hand, if $\lambda_{g,i} \ne 1$, then the function $e \mapsto \left|\sum_{a_i=0}^{p^e-1}(\xi_{g,e,i})^{a_i}\right|$ is bounded by a constant. In fact, $\lambda_{g,i} \ne 1$ if and only if $\xi_{g,e,i} \ne 1$ for all $e$, by Remark \ref{Remark_order_roots}. 
We fix $n=|G|$, and write $p^e=kn+r_e$, with $0 < r_e < n$. Since $\xi_{g,e,i}\ne 1$, we have $\sum_{a_i=jn}^{(j+1)n-1} (\xi_{g,e,i})^{a_i} =  0$ for all $j=0,\ldots, k-1$, and thus $\sum_{a_i=0}^{p^e-1} (\xi_{g,e,i})^{a_i} = \sum_{a_i=kn}^{p^e-1} (\xi_{g,e,i})^{a_i}$.

\par Now, fix $c\in\{0,\dots,d\}$. Following the previous argument, for each, $g\in G_c$ and all $e$ we have exactly $c$ eigenvalues in the set $\{\xi_{g,e,1},\dots,\xi_{g,e,d}\}$ which are equal to $1$. Therefore,
\[
\prod_{i=1}^d\sum_{a_i=0}^{p^e-1}(\xi_{g,e,i})^{a_i}=\eta_{g,c}p^{ce}
\]
for some function $\eta_{g,c} = \eta_{g,c}(e)$ that, for all $e \in \N$, satisfies $|\eta_{g,c}(e)| < C$ for some $C>0$ independent of $e$. 
Taking the sum over all $g\in G_c$, we obtain 
\[
\frac{1}{|G|}\sum_{g\in G_c}\overline{\chi_{\alpha}(g)}\prod_{i=1}^d\sum_{a_i=0}^{p^e-1}(\xi_{g,e,i})^{a_i}=\varphi_{c}^{(\alpha)}p^{ce},
\]
where $\varphi_{c}^{(\alpha)} = \varphi_{c}^{(\alpha)}(e) = \frac{1}{|G|}\sum_{g\in G_c}\overline{\chi_{\alpha}(g)} \eta_{g,c}(e)$. Note that $\left|\varphi_c^{(\alpha)}(e)\right|$ can be also bounded by a constant independent of $e$, because $|G_c|$ and $\left|\overline{\chi_{\alpha}(g)}\right|$ are independent of  $e$. 
Inserting the last formula in \eqref{eq_F-signature1}, we get
\[\mult(M_{\alpha},R^{1/p^e})=\sum_{c=0}^d\varphi_{c}^{(\alpha)}p^{ce}.
\]
This shows that $\mult(M_\alpha,R^{1/p^e})$ is a quasi-polynomial; the fact that $\mult(M_\alpha,R^{1/p^e}) \in \N$ for all $e \in \N$ now gives that that the functions $\varphi_c^{(\alpha)}$ take values in $\Q$. In addition, it is clear from the description of $\varphi_c^{(\alpha)}$ that $G_c=\emptyset$ implies $\varphi_{c}^{(\alpha)}=0$. Therefore, since $G$ does not contain any $(d-1)$-pseudoreflections, we have $G_{d-1}=\emptyset$, and consequently  $\varphi_{d-1}^{(\alpha)}(e)=0$ for all $e \in \N$. Furthermore, $\varphi_{d}^{(\alpha)}=\frac{\rank_RM_{\alpha}}{|G|}$ follows from the fact that $G_d=\{\mathrm{Id}_G\}$, and $\overline{\chi_{\alpha}(\mathrm{Id}_G)}=\dim_{\kk}V_{\alpha}=\rank_RM_{\alpha}$. 
 \par It is left to show that the functions $e \mapsto \varphi_{c}^{(\alpha)}(e)$ are periodic. It is enough to show that each function $e \mapsto \sum_{a_i=0}^{p^e-1} (\xi_{g,e,i})^{a_i}$ is periodic, for all $g$ and $i$ such that $\lambda_{g,i} \ne 1$. Since $p \nmid n$, where $n=|G|$, we can find $e'$ such that $p^{e'} \equiv 1$ modulo $n$. Note that we can choose $e'$ to be the order of $p$ in the group of units of $\Z/(n)$; in particular, we can assume that $e' \leq |G|-1$. Observe that $\lambda_{g,i}^{p^{e'}} = \lambda_{g,i}$, because $\lambda_{g,i}^n=1$. Since $(\phi^{-1}(\xi_{g,1,i}))^p=\lambda_{g,i}$, we get $(\phi^{-1}(\xi_{g,1,i}))^{pp^{e'}} = \lambda_{g,i} =  (\phi^{-1}(\xi_{g,1,i}))^p$, and it follows that $\xi_{g,1,i} = \xi_{g,e'+1,i}$. Finally, since this is true for all $g$ and $i$ such that $\lambda_{g,i} \ne 1$, we have that $\varphi_{c}^{(\alpha)}(e) = \varphi_{c}^{(\alpha)}(e+e')$ for all $e \in \N$.
\end{proof}

We postpone to Section \ref{section:examples} the presentation of some examples, which show how Theorem \ref{theorem-Fsignaturefunctionquotient} can be used to compute the F-signature function of specific quotient singularities (see e.g. Example~\ref{Ex_E6} and Example~\ref{Ex_3-VeroneseD6}).
\par We have shown in Theorem \ref{theorem-Fsignaturefunctionquotient} that $G_c=\emptyset$ implies that $\varphi_c^{(\alpha)}=0$ for all $\alpha$. We can prove a converse statement, provided $\alpha=0$. In other words, the vanishing of the function $\varphi_c^{(0)}$ is equivalent to the absence of $c$-pseudoreflections.
In order to simplify the notation, in the sequel when no confusion may arise, we will simply denote the function $\varphi_c^{(0)}$ by $\varphi_c$.

\begin{Proposition}\label{prop-Fsignaturefuncionquotient}
With the notations of Theorem \ref{theorem-Fsignaturefunctionquotient}, for any $c\in\{0,\dots,d-2\}$, we have $\varphi_{c}(e) = 0$ for all $e\in\mathbb{N}$ if and only if $G$ does not contain $c$-pseudoreflections.
\end{Proposition}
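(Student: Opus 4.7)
The forward implication $G_c = \emptyset \Rightarrow \varphi_c \equiv 0$ is already contained in the proof of Theorem \ref{theorem-Fsignaturefunctionquotient}, so the task reduces to showing that if $G_c \neq \emptyset$ then there exists at least one $e \in \N$ with $\varphi_c(e) \neq 0$. The plan is to exhibit an explicit such $e$ by exploiting the fact that, for the trivial character $\chi_0 = \overline{\chi_0} \equiv 1$, every summand of $\varphi_c(e)$ becomes non-negative, so no cancellation among distinct $g \in G_c$ can occur.

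Setting $\alpha = 0$ in the proof of Theorem \ref{theorem-Fsignaturefunctionquotient}, the formula for $\varphi_c(e)$ simplifies to
\[
\varphi_c(e) \;=\; \frac{1}{|G|} \sum_{g \in G_c} \eta_{g,c}(e), \qquad \eta_{g,c}(e) \;=\; \prod_{\substack{1 \leq i \leq d \\ \lambda_{g,i} \neq 1}} \sum_{a_i = 0}^{p^e-1} (\xi_{g,e,i})^{a_i}.
\]
The idea is to choose $e$ so that $p^e \equiv 1 \pmod{|G|}$; such an $e$ exists because $p \nmid |G|$, and in fact one may take $e$ to be the order of $p$ in $(\Z/|G|\Z)^*$, as already noted at the end of the proof of Theorem \ref{theorem-Fsignaturefunctionquotient}.

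With $n = |G|$ and $p^e = kn + 1$, I would then evaluate each inner geometric sum. By Remark \ref{Remark_order_roots}, each $\xi_{g,e,i}$ with $\lambda_{g,i} \neq 1$ is a non-trivial root of unity whose order divides $n$. Consequently, for $j = 0, \ldots, k-1$ one has $\sum_{a = jn}^{(j+1)n - 1} (\xi_{g,e,i})^a = (\xi_{g,e,i})^{jn} \sum_{b=0}^{n-1} (\xi_{g,e,i})^b = 0$, and the only surviving term in $\sum_{a=0}^{p^e-1} (\xi_{g,e,i})^a$ is the single summand at $a = kn = p^e - 1$, which equals $(\xi_{g,e,i})^{kn} = 1$. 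Therefore $\eta_{g,c}(e) = 1$ for every $g \in G_c$, and
\[
\varphi_c(e) \;=\; \frac{|G_c|}{|G|} \;>\; 0,
\]
proving the converse and simultaneously establishing part (2) of Theorem \ref{theoremA}.

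There is no real obstacle here beyond choosing the right $e$: the argument turns on the observation that for the trivial representation, $\varphi_c(e)$ is a sum of complex numbers that happen all to equal $1$ at the chosen exponent $e$, so it is impossible for cancellation to occur. The only point to be careful about is to verify that $\xi_{g,e,i}$ is itself non-trivial whenever $\lambda_{g,i} \neq 1$; this is precisely what Remark \ref{Remark_order_roots} guarantees, since $\phi$ and $(-)^{1/p^e}$ are group isomorphisms of $\mu_n(\kk)$ onto $\mu_n(\CC)$ and $\mu_n(\kk)$, respectively.
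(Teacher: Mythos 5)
Your proposal is correct and follows essentially the same route as the paper's own proof: both arguments fix an exponent $e$ with $p^e \equiv 1 \pmod{|G|}$, observe that each geometric sum $\sum_{a=0}^{p^e-1}(\xi_{g,e,i})^a$ over a non-trivial root of unity collapses to $1$, and conclude that $\varphi_c(e) = |G_c|/|G|$, which vanishes precisely when $G_c = \emptyset$. The only cosmetic difference is that you argue by exhibiting a witness $e$ with $\varphi_c(e) > 0$, whereas the paper phrases the same computation as a contradiction starting from the hypothesis $\varphi_c(e') = 0$.
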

\begin{proof}
\par The \textit{if} part of the statement has been proved in Theorem \ref{theorem-Fsignaturefunctionquotient}, so it remains to prove the \textit{only if} part.
 For this, fix $e'$ such that $p^{e'}\equiv1$ modulo $|G|$. 
 For $g\in G_c$  we denote by $\lambda_{g,1},\dots,\lambda_{g,d}$ its eigenvalues, and we set $\xi_{g,e',i}=\phi((\lambda_{g,i})^{1/p^{e'}})\in\mathbb{C}$ as in the proof of Theorem \ref{theorem-Fsignaturefunctionquotient}.  Since $g$ is a $c$-pseudoreflection, there will be exactly $c$ values from the set $\{\xi_{g,e',1},\dots,\xi_{g,e',d}\}$ that are equal to $1$. Without loss of generality, we may assume that $\xi_{g,e',1}=\dots=\xi_{g,e',c}=1$. Using the formula for $\varphi_c$ obtained inside the proof of Theorem \ref{theorem-Fsignaturefunctionquotient}, the assumption that $\varphi_c(e')=0$ gives
\begin{equation*}
\begin{split}
0&=\frac{1}{|G|}\sum_{g\in G_c}\prod_{i=1}^d\sum_{a_i=0}^{p^{e'}-1}(\xi_{g,p^{e'},i})^{a_i}\\
&=\frac{1}{|G|}\sum_{g\in G_c}\prod_{i=c+1}^d\sum_{a_i=0}^{p^{e'}-1}(\xi_{g,p^{e'},i})^{a_i}(p^{e'})^c\\
&=\frac{1}{|G|}\sum_{g\in G_c}\left(\prod_{i=c+1}^d1\right)(p^{e'})^c\\
&=\frac{1}{|G|}|G_c|(p^{e'})^c,
\end{split}
\end{equation*}
which implies $|G_c|=0$.
In the previous chain of equalities from the second to the third line we used the fact that
\[\sum_{a_i=0}^{p^{e'}-1}(\xi_{g,e',i})^{a_i}=\xi_{g,e',i}^{0}=1,
\] 
which is true because of our choice of $p^{e'} \equiv 1$ modulo $|G|$.
\end{proof}

The following Corollary is a direct consequence of the proof of Proposition \ref{prop-Fsignaturefuncionquotient}.
\begin{Corollary}\label{corollary-fsignaturecoprime}
For any $e\in\mathbb{N}$ such that $p^e\equiv 1$ modulo $|G|$, we have
	\begin{equation*}
FS(e)=\frac{1}{|G|}p^{de}+\frac{|G_ {d-2}|}{|G|}p^{(d-2)e}+\cdots+\frac{|G_1|}{|G|}p^e+\frac{|G_0|}{|G|}.
	\end{equation*}
In particular, if $p\equiv 1$ modulo $|G|$, then this is true for all $e\in\mathbb{N}$ so the F-signature function of $R$ is a polynomial in $p^e$ with constant coefficients.
\end{Corollary}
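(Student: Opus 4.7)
The plan is to deduce the corollary by retracing the computation in the proof of Proposition \ref{prop-Fsignaturefuncionquotient}, but now \emph{summing over all} $c \in \{0,\ldots,d\}$ (not just fixing one) and starting from the formula of Lemma \ref{lemma-F-signaturefunction} for the trivial representation $V_0$, for which $\overline{\chi_{V_0}(g)}=1$ for every $g \in G$. Writing $\xi_{g,e,i} = \phi(\lambda_{g,i}^{1/p^e})$ as in the proof of Theorem \ref{theorem-Fsignaturefunctionquotient} and using the disjoint decomposition $G = \bigsqcup_{c=0}^{d} G_c$, one has
\[
FS(e) \;=\; \frac{1}{|G|}\sum_{c=0}^{d}\,\sum_{g \in G_c}\,\prod_{i=1}^{d}\,\sum_{a_i=0}^{p^e-1} (\xi_{g,e,i})^{a_i}.
\]

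Next I would evaluate each inner sum under the hypothesis $p^e \equiv 1 \pmod{|G|}$, using the two computations that appear inside the proof of Proposition \ref{prop-Fsignaturefuncionquotient}. If $\lambda_{g,i}=1$, then $\xi_{g,e,i}=1$ and the inner sum equals $p^e$; by definition of $G_c$, for $g \in G_c$ this case occurs for exactly $c$ indices. If instead $\lambda_{g,i}\neq 1$, then $\xi_{g,e,i}$ is a nontrivial $m$-th root of unity for some $m$ dividing $|G|$ (by Remark \ref{Remark_order_roots}), and the assumption $p^e \equiv 1 \pmod{|G|}$ forces $p^e \equiv 1 \pmod m$. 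Partitioning $[0,p^e-1]\cap\mathbb{N}$ into $(p^e-1)/m$ complete residue classes modulo $m$ (each contributing $0$ to the geometric sum) plus the single extra term at $a_i = p^e-1 = km$, the sum collapses to $\xi_{g,e,i}^{km}=1$. Consequently, each $g \in G_c$ contributes $p^{ce}$ to the total, yielding
\[
FS(e) \;=\; \frac{1}{|G|}\sum_{c=0}^{d} |G_c|\,p^{ce}.
\]

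To finish, I would invoke the standing assumptions: $G_d = \{\mathrm{Id}_G\}$ gives the leading coefficient $1/|G|$, while $G$ being small gives $G_{d-1}=\emptyset$, so the $p^{(d-1)e}$ term drops out, leaving exactly the claimed expression. For the \textit{in particular} statement, if $p \equiv 1 \pmod{|G|}$ then by induction $p^e \equiv 1 \pmod{|G|}$ for every $e \in \mathbb{N}$, so the formula is valid for all $e$ and the coefficients $|G_c|/|G|$ are constants independent of $e$; hence $FS(e)$ is an honest polynomial in $p^e$ with constant rational coefficients.

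There is no serious obstacle here, since everything reduces to the geometric-sum computation already carried out in Proposition \ref{prop-Fsignaturefuncionquotient}. The only point that warrants care is that when $p^e \equiv 1 \pmod{|G|}$, the geometric sum of a nontrivial root of unity over a range of length $p^e$ evaluates to $1$ (not $0$, as it would if the length were a multiple of the order); this single residual term is precisely what produces the coefficients $|G_c|/|G|$ rather than $0$.
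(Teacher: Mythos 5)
Your proposal is correct and follows essentially the same route as the paper, which derives the corollary directly from the computation inside the proof of Proposition \ref{prop-Fsignaturefuncionquotient}: for $p^e\equiv 1\pmod{|G|}$ each inner geometric sum $\sum_{a_i=0}^{p^e-1}(\xi_{g,e,i})^{a_i}$ collapses to $1$ when $\xi_{g,e,i}\neq 1$ and to $p^e$ when $\xi_{g,e,i}=1$, giving $\varphi_c(e)=|G_c|/|G|$ for every $c$. Your explicit partition of $[0,p^e-1]$ into complete residue blocks plus one leftover term just spells out the paper's terse remark that the geometric sum equals $1$ under the hypothesis $p^{e'}\equiv 1\pmod{|G|}$.
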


\begin{Remark} \label{rem_graded}
	We state the results of this section in the complete local case; however, analogous versions are true in the graded setting.
	More precisely, let $\kk$ be an algebraically closed field of characteristic $p>0$, let $S=\kk[x_1,\dots,x_d]$ with $\deg x_i=1$, and let $G\subseteq\mathrm{Gl}(d,\kk)$ be a finite small group with $p\nmid |G|$. We consider the corresponding invariant ring $R=S^G$, which is $\mathbb{N}$-graded. 
	The multiplicity functions $\mult(M_{\alpha},R^{1/p^e})$ are defined similarly to the local case (see \cite[Section 3.1]{SmithVDB} for more details).
	The Auslander correspondence between irreducible $\kk$-representations of $G$ and graded indecomposable $R$-direct summands of $S$ is true also in this setting (see \cite[Section 4]{IyamaTakahashi} for a proof) and a graded version of Theorem \ref{theorem-SmithVandenBerg2} has been proved by Hashimoto and Nakajima \cite[Proposition 2.2]{HashimotoNakajima}.
	Therefore,  Theorem \ref{theorem-Fsignaturefunctionquotient}, Proposition \ref{prop-Fsignaturefuncionquotient}, and Corollary \ref{corollary-fsignaturecoprime} hold in this setting as well with analogous proofs.
\end{Remark}

\section{F-signature function of cyclic quotient singularities}\label{section:cyclicquotient}
Let $S=\kk\ps{x_1,\ldots,x_d}$, where $\kk$ is an algebraically closed field of characteristic $p>0$. Let $G \subseteq \mathrm{Gl}(d,\kk)$ be a finite small subgroup of order $n$, with $p \nmid n$. Throughout this section, we assume that $G$ is cyclic.
In particular, we may assume that $G$ is generated by an element $g=\mathrm{diag}(\lambda^{t_1},\dots,\lambda^{t_d})$, where $\lambda\in \kk$ is a primitive $n$-th root of unity and $t_1,\dots,t_d$ are non-negative integers.
It is harmless to assume $\gcd(t_1,\ldots,t_d,n)=1$. Moreover, since $G$ is small, we must have $\gcd(t_{j_1},\ldots,t_{j_{d-1}},n)=1$ for all subsets $\{j_1,\ldots,j_{d-1}\} \subseteq [d]$ of cardinality $d-1$. The ring $R=S^G$ of invariants with respect to the action of $G$ is called a cyclic quotient singularity, which we will denote by $\frac{1}{n}(t_1,t_2,\ldots,t_d)$. In this setup, we can apply Theorem \ref{theorem-Fsignaturefunctionquotient} to describe the functions $e \to \mult(M_\alpha,R^{1/p^e})$. However, given the special structure of the group $G$, we can say more about these functions.

\begin{Remark} \label{Remark_irreps_cyclic} When $G$ is a cyclic small group of order $n$, there are precisely $n$ irreducible $\kk$-representations $V_{0},\dots,V_{n-1}$ of $G$, and they all have rank $1$. Furthermore, for $\alpha\in\{0,\dots,n-1\}$, the Brauer character $\chi_{V_\alpha}$ will be of the form $\xi^j$ for some $0 \leq j \leq n-1$ and some primitive $n$-th root of unity $\xi\in\mathbb{C}$. We will then assume, without loss of generality, that the irreducible $\kk$-representations are such that $\chi_{V_\alpha} = \xi^{\alpha}$, for all $\alpha$.
\end{Remark}

In what follows, we denote by $\mathcal{P}=[0,1]^d$ the unitary cube of side $1$ inside $\R^d$ and, for each $\alpha \in \{0,\ldots,n-1\}$, we let $\mathcal{A}^{(\alpha)}$ be the lattice 
	\begin{equation}\label{eq:latticeA}
	\ds \mathcal{A}^{(\alpha)}=\{(a_1,\ldots,a_{d}) \in  \Z^d : t_1a_1 + t_2a_2 + \ldots + t_{d}a_{d} \equiv \alpha \mod n\}.
	\end{equation}
We start by relating the functions $e \mapsto \mult(M_\alpha, R^{1/p^e})$ to the number of lattice points inside multiples of the cube $\mathcal{P}$. 
\begin{Proposition}\label{prop-Fsignatureofcyclic}
Let $R$ be a $\frac{1}{n}(t_1,t_2,\dots,t_d)$-cyclic singularity over an algebraically closed field, and let $e\in\mathbb{N}$. Then
	\[
	\ds \mult(M_\alpha,R^{1/p^e})= |(p^e-1)\mathcal{P}\cap \mathcal{A}^{(\alpha)}|,
	\]
\end{Proposition}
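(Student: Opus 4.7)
By Theorem~\ref{theorem-SmithVandenBerg2}, the claim is equivalent to
\[
\mult\bigl(V_\alpha, (S/\m^{[p^e]})^{1/p^e}\bigr) = |(p^e-1)\mathcal{P}\cap\mathcal{A}^{(\alpha)}|,
\]
so the plan is to compute the right-hand side directly as a count of eigenvectors. Because $G$ is cyclic and all its irreducible $\kk$-representations $V_\beta$ are one-dimensional with pairwise distinct characters $\xi^\beta$ (Remark~\ref{Remark_irreps_cyclic}), the $V_\alpha$-isotypic component of any finite-dimensional $G$-representation is exactly the $g_0$-eigenspace whose eigenvalue matches $\chi_{V_\alpha}$. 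Consequently, the multiplicity is obtained by fixing any $g_0$-eigenbasis of $(S/\m^{[p^e]})^{1/p^e}$ and counting the basis vectors of the correct eigenvalue.

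Concretely, I will use the eigenbasis already identified inside the proof of Lemma~\ref{lemma-F-signaturefunction}: the twisted monomials $(x_1^{a_1}\cdots x_d^{a_d})^{1/p^e}$ indexed by $(a_1,\ldots,a_d)$ running over $[0,p^e-1]^d\cap\Z^d$, which is precisely the set of integer lattice points of the scaled cube $(p^e-1)\mathcal{P}$. By the Frobenius-twist rule $(\lambda v)^{1/p^e}=\lambda^{1/p^e}v^{1/p^e}$ together with the diagonal action of $g_0=\mathrm{diag}(\lambda^{t_1},\ldots,\lambda^{t_d})$, each such basis element is a $g_0$-eigenvector whose eigenvalue is determined by $t_1a_1+\cdots+t_d a_d$ modulo $n$. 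The condition that this eigenvalue equals the character value prescribed by $V_\alpha$ then translates, under the labeling convention $\chi_{V_\alpha}=\xi^\alpha$ of Remark~\ref{Remark_irreps_cyclic}, into the linear congruence $t_1a_1+\cdots+t_d a_d\equiv\alpha\pmod n$, i.e. $(a_1,\ldots,a_d)\in\mathcal{A}^{(\alpha)}$.

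The main technical delicacy is in the last translation, since the Frobenius twist introduces a $p^e$-th root into the eigenvalues and therefore permutes the labels of the irreducible characters. This is harmless because $\gcd(p,n)=1$ forces $(-)^{1/p^e}$ to be a group automorphism of $\mu_n(\kk)$, so the induced permutation is by an element of $(\Z/n\Z)^{\times}$; the labeling convention in Remark~\ref{Remark_irreps_cyclic} is chosen to absorb it, and no further bookkeeping is needed. Summing the contributions over all $(a_1,\ldots,a_d)\in(p^e-1)\mathcal{P}\cap\Z^d$ satisfying the congruence yields the desired count $|(p^e-1)\mathcal{P}\cap\mathcal{A}^{(\alpha)}|$. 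As a sanity check, an alternative route starting from the explicit character sum of Lemma~\ref{lemma-F-signaturefunction} and invoking orthogonality of characters collapses the same double sum to the same lattice-point count.
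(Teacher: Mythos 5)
Your proof is correct and takes a slightly different route from the paper. The paper invokes Lemma~\ref{lemma-F-signaturefunction} as a black box and then collapses the resulting double sum over $j=0,\ldots,n-1$ and $(a_1,\ldots,a_d)$ via orthogonality of the complex roots $\xi_e^j$. You instead exploit the fact that for cyclic $G$ all irreducible representations are one-dimensional, so the isotypic decomposition of $(S/\m^{[p^e]})^{1/p^e}$ is literally the eigenspace decomposition of a generator $g_0$, and you count the twisted monomial eigenvectors with the prescribed eigenvalue directly --- in effect unfolding the proof of Lemma~\ref{lemma-F-signaturefunction} rather than applying its conclusion. Both arguments hinge on the same two inputs: the monomial eigenbasis of $(S/\m^{[p^e]})^{1/p^e}$ and the identification of $\kk$-eigenvalues with character labels via $\phi$ and the Frobenius twist. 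Your route is arguably more transparent in the cyclic case since it dispenses with the character sum and orthogonality; the paper's route keeps the machinery parallel to the general (non-abelian) case of Theorem~\ref{theorem-Fsignaturefunctionquotient}. You are right to flag the labeling subtlety, and it would help to make it fully explicit: the convention of Remark~\ref{Remark_irreps_cyclic} must be read as $\chi_{V_\alpha}=\xi_e^{\alpha}$ with $\xi_e=\phi(\lambda^{1/p^e})$, i.e., the labeling of the $V_\alpha$ is silently tied to the chosen $e$ so that the twist-induced permutation of $\mu_n$ is absorbed into the indexing; the paper makes this choice explicit inside its proof, and your argument needs it at exactly the same spot to turn the eigenvalue condition into the congruence $t_1a_1+\cdots+t_d a_d\equiv\alpha\pmod n$.
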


\begin{proof}
Since $G$ is cyclic, its elements can be written as $g^j$ for $j=0,\dots,n-1$. In particular, observe that the eigenvalues of $g^j$ are $\lambda^{jt_1}$, $\lambda^{jt_2},\dots,\lambda^{jt_d}$.
Let $\xi_e = \phi(\lambda^{1/p^e})$ be the image in $\CC$ of the unique $p^e$-root of $\lambda$ in $\kk$. Notice that $\xi_e$ is a primitive complex $n$-th root of unity, so by Remark \ref{Remark_irreps_cyclic} we may assume  that $\chi_{V_\alpha}=\xi_e^\alpha$.
Observe that $\overline{\chi_{V_\alpha}} = \overline{\xi_e^\alpha} = \xi_e^{-\alpha}$. Then from Lemma \ref{lemma-F-signaturefunction} we obtain 
\begin{align*}
\mult(M_\alpha,R^{1/p^e}) & = \frac{1}{n} \sum_{j=0}^{n-1} \xi_e^{-j\alpha} \sum_{\tiny{(a_1,\ldots,a_d) \in ([0,p^e-1]\cap \N)^d}} \xi_e^{j(t_1a_1+t_2a_2 + \ldots + t_{d}a_{d})} \\
&  = \frac{1}{n} \sum_{j=0}^{n-1} \sum_{\tiny{(a_1,\ldots,a_d) \in ([0,p^e-1]\cap \N)^d}} \xi_e^{j(t_1a_1+t_2a_2 + \ldots + t_{d}a_{d}-\alpha)}.
\end{align*}
Since $\sum_{j=0}^{n-1} \xi_e^{ij} = 0$ for all $i \not\equiv 0$ modulo $n$, the only contribution to the sum above is for $(a_1,\ldots,a_d)$ such that $t_1a_1+t_2a_2 + \ldots + t_{d}a_{d} \equiv \alpha$ modulo $n$, in which case $\xi_e^{t_1a_1+t_2a_2 + \ldots + t_{d}a_{d}-\alpha} = 1$. Therefore
\begin{align*}
\mult(M_\alpha,R^{1/p^e}) & = \frac{1}{n} \sum_{j=0}^{n-1} \sum_{\tiny{\begin{array}{c} t_1a_1+t_2a_2 + \ldots + t_{d}a_{d} \equiv \alpha \mod n \\ (a_1,\ldots,a_d) \in ([0,p^e-1]\cap \N)^d \end{array}}} 1 \\
& = |\{(a_1,\ldots,a_{d}) \in ([0,p^e-1] \cap \N)^d : t_1a_1 + t_2a_2 + \ldots + t_{d}a_{d} \equiv \alpha \mod n\}|\\
& = |(p^e-1)\mathcal{P} \cap \mathcal{A}^{(\alpha)}|.
\end{align*}
\end{proof}

Proposition \ref{prop-Fsignatureofcyclic} exhibits a connection between the F-signature function of cyclic quotient singularities and Erhart functions of rational polytopes. This is not surprising: in fact, cyclic quotient singularities are toric, and Von Korff proved that the F-signature function of toric rings is an Erhart function \cite{VonKorff} (see also \cite{Bruns} for related results). However, while in Von Korff's approach the lattice is $\mathbb{Z}$ and the polytope is not a cube, in Proposition \ref{prop-Fsignatureofcyclic} the lattice is more complicated, but the polytope is a cube. The advantage of our method is that it allows to compute the coefficients of the quasi-polynomial $\mult(M_\alpha,R^{1/p^e})$ more explicitly, and to relate them to properties of the group $G$ (see Theorem \ref{theorem-Fsignaturecyclic}).

\subsection{Congruences and partitions} \label{Subsection_congruence} 
In this subsection we recall some well-known facts about congruences modulo an integer.
The results and the methods of this subsection are general in nature and independent of the cyclic quotient singularities setting.
However, the notation we introduce and lemmas we prove here will be used in the rest of Section \ref{section:cyclicquotient}.

The following Lemma about number of solutions of certain congruence relations is a well-known classical result, therefore we omit a proof.
\begin{Lemma}\label{idolo} Let $t_1,\ldots,t_i,n,b$ be non-negative integers, with $n\ne 0$, and $g=\gcd(t_1,\ldots,t_i,n)$ that divides $b$. The congruence $t_1x_1 + \ldots + t_ix_i \equiv b$ modulo $n$ has $g \cdot n^{i-1}$ incongruent solutions $(x_1,\ldots,x_i) \in \Z/(n)^{\oplus i}$.
\end{Lemma}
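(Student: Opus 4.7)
The plan is to interpret the problem group-theoretically. Consider the group homomorphism
\[
\varphi : (\Z/(n))^{\oplus i} \longrightarrow \Z/(n), \qquad (x_1,\ldots,x_i) \longmapsto t_1 x_1 + \ldots + t_i x_i \pmod n.
\]
Counting solutions to the congruence amounts to computing $|\varphi^{-1}(\bar b)|$. If $\bar b \in \mathrm{Im}(\varphi)$, then $\varphi^{-1}(\bar b)$ is a coset of $\ker(\varphi)$ and so has cardinality $|\ker(\varphi)| = n^i/|\mathrm{Im}(\varphi)|$.

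Next I would identify the image. By definition, $\mathrm{Im}(\varphi)$ is the subgroup of $\Z/(n)$ generated by the residues $\bar t_1,\ldots,\bar t_i$. Lifting to $\Z$, this is the image of the ideal $(t_1,\ldots,t_i) + (n) = (\gcd(t_1,\ldots,t_i,n))=(g)$, so $\mathrm{Im}(\varphi) = g\Z/(n)$, a cyclic subgroup of order $n/g$. The hypothesis $g \mid b$ is exactly the condition $\bar b \in \mathrm{Im}(\varphi)$, so the congruence is solvable. Plugging in gives $|\ker(\varphi)| = n^i/(n/g) = g n^{i-1}$, which is the claimed count.

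There is no real obstacle here: the argument is essentially a one-line application of the first isomorphism theorem once the image is computed, and computing the image reduces to the classical Bezout-type fact that the subgroup of $\Z/(n)$ generated by $\bar t_1,\ldots,\bar t_i$ is $(\bar g)$. One can alternatively proceed by induction on $i$, peeling off one variable at a time and using the $i=1$ case $t_1 x_1 \equiv b \pmod n$ (which has $\gcd(t_1,n)$ solutions when the gcd divides $b$). Either route yields the formula $gn^{i-1}$.
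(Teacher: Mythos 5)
The paper deliberately omits a proof of this lemma, stating only that it is ``a well-known classical result,'' so there is no proof in the text to compare against. Your argument is correct and complete: the map $\varphi$ is indeed a homomorphism of finite abelian groups, the image is precisely $(g)/(n)\cong\Z/(n/g)$ by the Bezout identity $(t_1,\ldots,t_i)+(n)=(g)$ in $\Z$, the solvability criterion $g\mid b$ is exactly $\bar b\in\mathrm{Im}(\varphi)$, and the fiber over any point of the image is a coset of $\ker\varphi$, whose order is $n^i/(n/g)=g\,n^{i-1}$ by the first isomorphism theorem. This is a clean and standard way to establish the count; the alternative induction on $i$ you mention would also work and reduces to the classical $i=1$ case.
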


We now introduce some notation that will largely be used in the rest of this section.
\begin{Notation} \label{notation_Gamma}
Fix positive integers $d,n$ and $p>1$, with $\gcd(p,n)=1$.  Fix a natural number $e$, and  write $p^e=nk+r_e$, with $0 <  r_e < n$. For every $0 \leq i \leq d$ we let $\Gamma_i =\{ J \subseteq [d]  : |J|=i\}$. For $J \in \Gamma_i$, we let $C_J = \prod_{j=1}^d \left([0,b_j] \cap \N\right) \subseteq \N^d$, with 
\[
\ds b_j= \left\{\begin{array}{ll} n-1 & \mbox{ if } j \in J \\ \\
 r_e-1 & \mbox{ if } j \notin J
 \end{array}
 \right.
\]
For example, we have $C_{[d]} = ([0,n-1] \cap \N)^d$, and $C_{\emptyset}= ([0,r_e-1]\cap \N)^d$.  

Now let $1 \leq i \leq d$. For $J = \{j_1,\ldots,j_i\} \subseteq \Gamma_i$, with $j_1 < j_2 < \ldots, j_i$, we let $\sigma_J :J \to [i]$ be the map defined as $\sigma_J(j_\ell) = \ell$ for all $1 \leq \ell \leq i$. For $\underline{s}=(s_1,\ldots,s_i) \in \N^i$ we define a vector $v_{J,\underline{s}} = ((v_{J,\underline{s}})_1,\ldots,(v_{J,\underline{s}})_d) \in \N^d$ in the following way:
\[
\ds (v_{J,\underline{s}})_j= \left\{ \begin{array}{ll} s_{\sigma_J(j)} & \mbox{ if } j \in J \\ \\
k & \mbox{ if } j\notin J
\end{array}
\right.
\]  
Finally, for convenience, we set $([0,k-1]\cap\N)^0=\{\star\}$, and $v_{\emptyset,\star} = (k,\ldots,k)$.
\end{Notation}
Given a set $C \subseteq \N^d$ and a $d$-uple $(a_1,\ldots,a_d)$, we denote by $C+(a_1,\ldots,a_d)$ the Minkowski sum $\{(c_1+a_1,\ldots,c_d+a_d) : (c_1,\ldots,c_d)\in C\}$. We will call it the shift of the set $C$ by $(a_1,\ldots,a_d)$. With the notation we have introduced, we can partition the set $([0,p^e-1]\cap \N)^d$ into shifts of sets of the form $C_J$, for $J \subseteq [d]$.
\begin{Lemma} \label{partition}
We have the following partition:
\begin{align*}
\ds \left([0,p^e-1] \cap \N\right)^d & = \bigsqcup_{i=0}^d \bigsqcup_{J \in \Gamma_i} \left(\bigsqcup_{\tiny{ \underline{s} \in ([0,k-1] \cap \N)^i}} (C_J + nv_{J,\underline{s}}) \right).
\end{align*}
\end{Lemma}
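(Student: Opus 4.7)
The plan is to prove the partition by explicit coordinate-wise Euclidean division modulo $n$, since $p^e = nk + r_e$ makes the interval $[0,p^e-1]\cap\N$ split naturally into a ``full'' part $[0,nk-1]$ and a ``residual tail'' $[nk, nk+r_e-1]$. The index set $J$ will record which coordinates fall into the full part, and the auxiliary vector $\underline{s}$ will record the multiples of $n$ within that full part.

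First I would fix an arbitrary tuple $(a_1,\ldots,a_d) \in ([0,p^e-1]\cap\N)^d$ and, for each $j$, perform division: $a_j = nq_j + c_j$ with $0 \le c_j < n$. Because $a_j \le p^e - 1 = nk + r_e - 1 < n(k+1)$, we get $0 \le q_j \le k$, and moreover the case $q_j = k$ forces $c_j \in [0,r_e-1]$. Next, define $J := \{\,j \in [d] : q_j \le k-1\,\}$, with $i := |J|$, and order the elements of $J$ as $j_1 < \cdots < j_i$. Setting $\underline{s} := (q_{j_1},\ldots,q_{j_i}) \in ([0,k-1]\cap\N)^i$, I would then check that the $d$-tuple $(c_1,\ldots,c_d)$ lies in $C_J$: for $j \in J$ we have $c_j \in [0,n-1]$ (hence $b_j = n-1$ is the right bound), while for $j \notin J$ we have $q_j = k$ and thus $c_j \in [0,r_e-1]$ (hence $b_j = r_e - 1$ is the right bound). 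Finally, comparing $nq_j$ with $n(v_{J,\underline{s}})_j$ directly from Notation~\ref{notation_Gamma} shows $a_j = c_j + n(v_{J,\underline{s}})_j$, i.e., $(a_1,\ldots,a_d) \in C_J + nv_{J,\underline{s}}$. This establishes that the right-hand side covers the left-hand side.

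To get disjointness, I would argue in reverse: from a given point in $C_J + nv_{J,\underline{s}}$, the values of $J$, $\underline{s}$, and the representative in $C_J$ can be read off uniquely. Indeed, for $j \notin J$ the $j$-th coordinate lies in $[nk, nk + r_e - 1]$, while for $j \in J$ the $j$-th coordinate lies in $[0, nk-1]$; so $J$ is unambiguously the set of indices with $j$-th coordinate strictly less than $nk$. Once $J$ is determined, the quotient and remainder modulo $n$ of each coordinate recover the entries of $\underline{s}$ and of $C_J$ uniquely. Two distinct choices of $(J,\underline{s})$ therefore produce disjoint blocks, and two distinct elements within the same block lie in distinct translates.

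The bulk of the argument is bookkeeping rather than real mathematics, so I do not anticipate a serious obstacle; the only mild subtlety is handling the boundary case $q_j = k$ correctly (ensuring $c_j < r_e$ rather than $c_j < n$), and the edge case $J = \emptyset$, where $([0,k-1]\cap\N)^0 = \{\star\}$ collapses the inner union to the single block $C_\emptyset + nv_{\emptyset,\star} = ([0,r_e-1]\cap\N)^d + (nk,\ldots,nk) = ([nk, p^e-1]\cap\N)^d$. Making sure the conventions in Notation~\ref{notation_Gamma} are applied consistently at both extremes $J = \emptyset$ and $J = [d]$ is the only point where I would slow down and double-check.
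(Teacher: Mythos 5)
Your proof is correct and follows essentially the same approach as the paper: both identify $J$ as the set of indices $j$ with $a_j < kn$ (equivalently $q_j \le k-1$), and recover $\underline s$ and the $C_J$-representative by Euclidean division coordinate-wise. The paper's version leaves the verification and the disjointness claim to the reader, while you spell out the bookkeeping; the extra care you take with the boundary case $q_j=k$ and the edge case $J=\emptyset$ is exactly what one should check, and it all goes through.
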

Note that, on the right-hand side of the equation, the sets $C_J$ and the vectors $v_{J,\underline{s}}$ depend on $e$. At this stage, we have decided to keep the dependence of these objects on $e$ implicit, since we believe this should not be source of confusion, while adding it to the notation would only make the statement harder to read. Before starting the proof, to better illustrate our notation and the statement of the Lemma, we display the partition when $d=p=2$ and $e=n=3$, in which case we have $k=r_e=2$.
\begin{figure}[h]
\begin{tikzpicture}
  [decoration={markings,mark=at position 1 with {\arrow{stealth}}},
   blob/.style={circle,fill,minimum width=4pt,inner sep=0pt},
   flow/.style={postaction={decorate}}
  ]
  \node[blob, label=left:0] (O) at (-1, 0) {};
  \node (Y) at (-1, 7.5) {};
  \node (X) at (6.5,0) {};
  \draw[flow] (O)  -- (Y);
  \draw[flow] (O)  -- (X);
  \node[blob, label=left:1] (y1) at (-1,1) {};
  \node[blob, label=left:2] (y2) at (-1,2) {};
  \node[blob, label=left:3] (y3) at (-1,3) {};
  \node[blob, label=left:4] (y4) at (-1,4) {};
  \node[blob, label=left:5] (y5) at (-1,5) {};
  \node[blob, label=left:6] (y6) at (-1,6) {};
  \node[blob, label=left:7] (y7) at (-1,7) {};
  \node[blob, label=below:1] (x1) at (0,0) {};
  \node[blob, label=below:2] (x2) at (1,0) {};
  \node[blob, label=below:3] (x3) at (2,0) {};
  \node[blob, label=below:4] (x4) at (3,0) {};
  \node[blob, label=below:5] (x5) at (4,0) {};
  \node[blob, label=below:6] (x6) at (5,0) {};
  \node[blob, label=below:7] (x7) at (6,0) {};
  \node[blob] () at (0,1) {};
  \node[blob] () at (1,1) {};
  \node[blob] () at (2,1) {};
  \node[blob] () at (3,1) {};
  \node[blob] () at (4,1) {};
  \node[blob] () at (5,1) {};
  \node[blob] () at (6,1) {};
  \node[blob] () at (0,2) {};
  \node[blob] () at (1,2) {};
  \node[blob] () at (2,2) {};
  \node[blob] () at (3,2) {};
  \node[blob] () at (4,2) {};
  \node[blob] () at (5,2) {};
  \node[blob] () at (6,2) {};
  \node[blob] () at (0,3) {};
  \node[blob] () at (1,3) {};
  \node[blob] () at (2,3) {};
  \node[blob] () at (3,3) {};
  \node[blob] () at (4,3) {};
  \node[blob] () at (5,3) {};
  \node[blob] () at (6,3) {};
  \node[blob] () at (0,4) {};
  \node[blob] () at (1,4) {};
  \node[blob] () at (2,4) {};
  \node[blob] () at (3,4) {};
  \node[blob] () at (4,4) {};
  \node[blob] () at (5,4) {};
  \node[blob] () at (6,4) {};
  \node[blob] () at (0,5) {};
  \node[blob] () at (1,5) {};
  \node[blob] () at (2,5) {};
  \node[blob] () at (3,5) {};
  \node[blob] () at (4,5) {};
  \node[blob] () at (5,5) {};
  \node[blob] () at (6,5) {};
  \node[blob] () at (0,6) {};
  \node[blob] () at (1,6) {};
  \node[blob] () at (2,6) {};
  \node[blob] () at (3,6) {};
  \node[blob] () at (4,6) {};
  \node[blob] () at (5,6) {};
  \node[blob] () at (6,6) {};
  \node[blob] () at (0,7) {};
  \node[blob] () at (1,7) {};
  \node[blob] () at (2,7) {};
  \node[blob] () at (3,7) {};
  \node[blob] () at (4,7) {};
  \node[blob] () at (5,7) {};
  \node[blob] () at (6,7) {};
  \draw[dashed] (-1,0) rectangle (1,2);
  \node at (-0.3,1) (a1) {};
  \node at (-3,1.5) () {$C_{[2]}$};
  \node at (-3,1) (a) {$+$};
  \node at (-3,0.5) () {$3v_{[2],(0,0)}$};
  \draw[->] (a) to [out=20,in=160] (a1);
  \draw[dashed] (-1,3) rectangle (1,5);
  \node at (-0.3,4) (b1) {};
  \node at (-3,4.5) () {$C_{[2]}$};
  \node at (-3,4) (b) {$+$};
   \node at (-3,3.5) () {$3v_{[2],(0,1)}$};
   \draw[draw,->] (b) to [out=20,in=160] (b1);
   \draw[dashed] (2,0) rectangle (4,2);
   \node at (3.5,0.5) (c1) {};
  \node at (3,-1) (c) {$C_{[2]}$};
  \node at (3,-1.5) () {$+$};
  \node at (3,-2) () {$3v_{[2],(1,0)}$};
  \draw[->] (c) to [out=70,in=290] (c1);
  \draw[dashed] (2,3) rectangle (4,5);
  \node at (3.5,4.5) (d1) {};
  \node at (5,9) () {$C_{[2]}$};
  \node at (5,8.5) () {$+$};
  \node at (5,8) (d) {$3v_{[2],(1,1)}$};
  \draw[->] (d) to [out=260,in=70] (d1);
  \draw[dashed] (5,0) rectangle (6,2);
  \node at (5.5,1) (d) {};
  \node at (7,1.5) () {$C_{\{2\}}$};
  \node at (7,1) (e) {$+$};
  \node at (7,0.5) () {$3v_{\{2\},(0)}$};
  \draw[draw,->] (e) to [out=160,in=20] (d);
  \draw[dashed] (5,3) rectangle (6,5);
  \node at (5.5,4) (f) {};
  \node at (7,4.5) () {$C_{\{2\}}$};
  \node at (7,4) (g) {$+$};
  \node at (7,3.5) () {$3v_{\{2\},(1)}$};
  \draw[draw,->] (g) to [out=160,in=20] (f);
  \draw[dashed] (5,6) rectangle (6,7);
  \node at (5.5,6.5) (h) {};
  \node at (7,7) () {$C_{\emptyset}$};
  \node at (7,6.5) (i) {$+$};
  \node at (7,6) () {$3v_{\emptyset,\star}$};
  \draw[draw,->] (i) to [out=160,in=20] (h);
  \draw[dashed] (-1,6) rectangle (1,7);
  \node at (-0.5,6.5) (j) {};
  \node at (-1,9) () {$C_{\{1\}}$};
  \node at (-1,8.5) () {$+$};
  \node at (-1,8) (k) {$3v_{\{1\},(0)}$};
  \draw[draw,->] (k) to [out=290,in=70] (j);
  \draw[dashed] (2,6) rectangle (4,7);
  \node at (2.5,6.5) (l) {};
  \node at (2,9) () {$C_{\{1\}}$};
  \node at (2,8.5) () {$+$};
  \node at (2,8) (m) {$3v_{\{1\},(1)}$};
  \draw[draw,->] (m) to [out=290,in=70] (l);
\end{tikzpicture}
\caption{Case $d=p=2$, $e=n=3$}
\label{figure}
\end{figure}
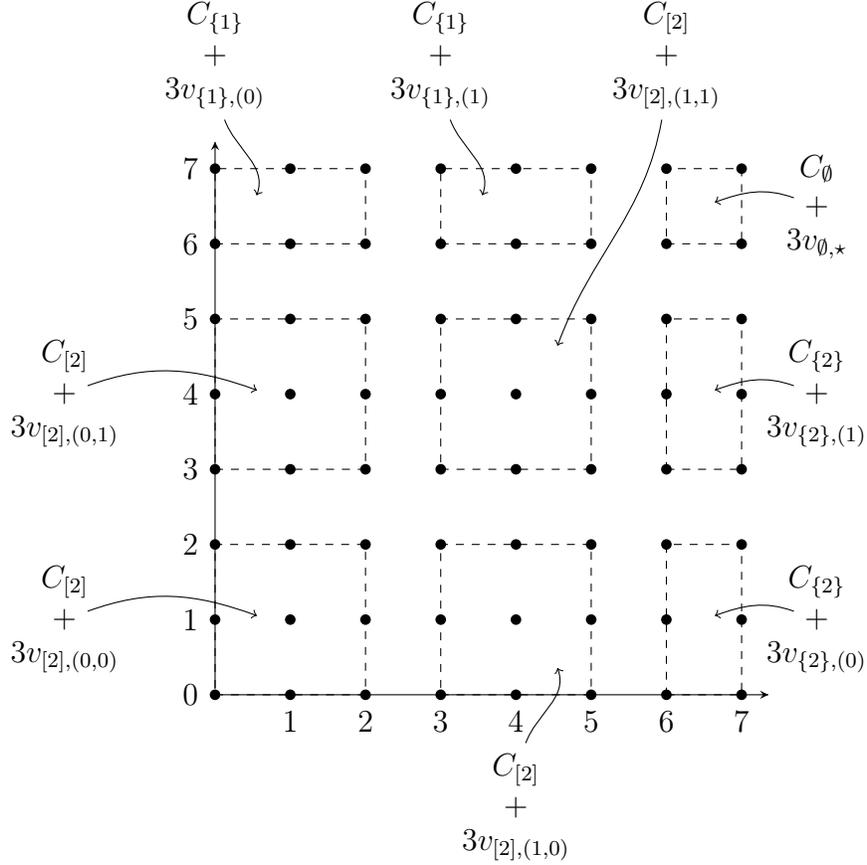

\begin{proof}[Proof of Lemma \ref{partition}]
For $\underline{s} \in ([0,k-1]\cap \N)^i$, each $C_J + nv_{J,\underline{s}}$ is contained in $([0,p^e-1]\cap \N)^d$. Therefore, the union of such sets is contained in $([0,p^e-1]\cap\N)^d$ as well. To see the other containment, let $(a_1,\ldots,a_d) \in ([0,p^e-1]\cap \N)^d$. Let $J = \{j_1,\ldots,j_i\}$, with $j_1< \ldots < j_i$, be the set of $j \in [d]$ such that $a_j<kn$. If $J=\emptyset$, then $(a_1,\ldots,a_d) \in ([kn,p^e-1]\cap \N)^d = C_\emptyset + v_{\emptyset,\star}$. If $J \ne \emptyset$, for each $j_\ell \in J$, write $a_{j_\ell} = s_{j_\ell}n + r_{j_\ell}$, with $0 \leq r_{j_\ell} < n$, and set $\underline{s} = (s_{j_1},\ldots,s_{j_i})$. With these choices, one can check that $(a_1,\ldots,a_d) \in C_J+nv_{J,\underline{s}}$. It is also straightforward, and we leave it to the reader, to check that all the sets $C_J + nv_{J,\underline{s}}$ are disjoint.
\end{proof}

We need some additional notation.
\begin{Notation} \label{notation_psi}
Fix positive integers $d,n$ and $p$, with $\gcd(p,n)=1$, and non-negative integers $t_1,\dots,t_d$. Let $0\leq i< d$ be an integer, and  $J \in \Gamma_i$. Write $J=\{j_1,\ldots,j_i\}$, and let $g_J = \gcd(t_{j_1},\ldots,t_{j_i},n)$, where for convenience we set $g_\emptyset = n$. We let $[d] \smallsetminus J = \{j_{h_1},\ldots,j_{h_{d-i}}\}$. 

Given a positive integer $e$, write $p^e=kn+r_e$, with $0 < r_e<n$. For $\alpha \in \{0,\ldots,n-1\}$, let
\[
\ds \mathcal{B}_J^{(\alpha)}(e)= \left\{(a_{1},\ldots,a_{d-i}) \in ([0,r_e-1] \cap \N)^{d-i}  : \sum_{\ell=1}^{d-i} a_\ell t_{h_\ell} \equiv \alpha\mod g_J\right\}.
\]
Finally, for all $e \in \N$, we define $\theta_J^{(\alpha)}(e)$ to be the cardinality of the set $\mathcal{B}_J^{(\alpha)}(e)$. 
\par In other words, $\theta_J^{(\alpha)}(e)$ counts the number of incongruent $(d-i)$-uples $(\overline{a_1},\ldots,\overline{a_{d-i}})$ in $\mathbb{Z}/(r_e)^{\oplus (d-i)}$ such that their lifts $(a_1,\ldots,a_{d-i})$ to $\Z$, with $0\leq a_\ell \leq r_e-1$, satisfies $\sum_{\ell =1}^{d-i} a_\ell t_{h_\ell} +ag_J = \alpha$ for some $a \in \Z$. For convenience, we set $\theta_{[d]}^{(\alpha)}(e) = 1$ for all $\alpha=0,\ldots, n-1$, and $e \in \N$. For $i\in\{0,\ldots,d\}$ and $\alpha \in\{ 0,\ldots,n-1\}$, consider the following functions
\[
e \in \N \mapsto \psi_i^{(\alpha)}(e) = \sum_{J \in \Gamma_i} g_J\theta_J^{(\alpha)}(e).
\] 
When no confusion may arise, we will simply denote $\theta_J^{(0)}$ and $\psi_J^{(0)}$ by $\theta_J$ and $\psi_J$. 
\end{Notation}
The functions $\theta_J^{(\alpha)}$ count the number of solutions of certain diophantine equations in linearly bounded regions. 
This problem has been studied in the context of \emph{integer linear programming}.
The interested reader may consult \cite{IntegerProgrammingBook} for an introduction to this research area.
A recursive formula for  $\theta_J^{(\alpha)}$ can be also deduced from \cite{Faaland}.

\begin{Remark} \label{rem_psi} 
For all $J \in \Gamma_i$, $\alpha\in\{0,\ldots,n-1\}$, and $e \in \N$, we have bounds $0 \leq \theta_J^{(\alpha)}(e) \leq r_e^{d-i}$. The upper bound is clear from the range where we $(a_1,\ldots,a_{d-i})$ varies. When $\alpha=0$, the lower bound can be improved to $1 \leq \theta_J^{(0)}(e)$ for all $J$ and all $e$, since the $(d-i)$-uple $(0,\ldots,0)$ always belongs to $\mathcal{B}_J^{(0)}(e)$. Note that the upper bound $\theta_J^{(\alpha)}(e) = r_e^{d-i}$ is always achieved, independently of $\alpha$, when $g_J = 1$. Moreover, if $r_e=1$, then for all $J$ and all $e$ we have $\theta^{(\alpha)}_J(e) = 0$ when $\alpha \ne 0$, while $\theta^{(0)}_J(e) = 1$.
\end{Remark}
\begin{Proposition} \label{counting C_J} We adopt the notation introduced in  \ref{notation_Gamma} and \ref{notation_psi}. For $\alpha\in\{0,\dots,n-1\}$, we consider the lattice $ \mathcal{A}^{(\alpha)}$	defined in \eqref{eq:latticeA}.
Let $0 \leq i \leq d$ be an integer, and $J \in \Gamma_i$. Write $J=\{j_1,\ldots,j_i\}$  and let $g_J = {\rm gcd}(t_{j_1},\ldots,t_{j_i},n)$. For $e \in \N$, write $p^e=kn+r_e$, with $0 < r_e < n$. Then
\[
\ds \left|\left(C_J + nv_{J,\underline{s}}\right) \cap \mathcal{A}^{(\alpha)} \right| = |C_J \cap \mathcal{A}^{(\alpha)}| = \theta_J^{(\alpha)}(e)g_Jn^{i-1}.
\]
for all $\underline{s} \in ([0,k-1] \cap \N)^i$.
\end{Proposition}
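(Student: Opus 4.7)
The plan is to prove the two equalities separately. For the first one, observe that $\mathcal{A}^{(\alpha)}$ is invariant under translation by any vector in $(n\mathbb{Z})^d$, because it is defined by a single linear congruence modulo $n$. Since every entry of $nv_{J,\underline{s}}$ is a multiple of $n$, the map $x\mapsto x+nv_{J,\underline{s}}$ restricts to a bijection between $C_J\cap\mathcal{A}^{(\alpha)}$ and $(C_J+nv_{J,\underline{s}})\cap\mathcal{A}^{(\alpha)}$, so these two sets have the same cardinality.

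For the second equality, write $J=\{j_1,\ldots,j_i\}$ and $[d]\smallsetminus J=\{h_1,\ldots,h_{d-i}\}$, and split a point of $C_J$ as a pair $(\underline{u},\underline{a})$, where $\underline{u}=(u_1,\ldots,u_i)\in([0,n-1]\cap\mathbb{N})^i$ collects the coordinates indexed by $J$ and $\underline{a}=(a_1,\ldots,a_{d-i})\in([0,r_e-1]\cap\mathbb{N})^{d-i}$ collects the remaining coordinates. Membership in $\mathcal{A}^{(\alpha)}$ then reads
\[
t_{j_1}u_1+\cdots+t_{j_i}u_i \ \equiv \ \alpha-\sum_{\ell=1}^{d-i}t_{h_\ell}a_\ell \ \pmod{n}.
\]
I would fix $\underline{a}$ and count the admissible $\underline{u}$. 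Since $[0,n-1]^i$ is a complete set of representatives of $(\mathbb{Z}/n\mathbb{Z})^i$, the number of such $\underline{u}$ equals the number of incongruent solutions modulo $n$. By Lemma~\ref{idolo}, this number is $g_J n^{i-1}$ when $g_J$ divides $\alpha-\sum_\ell t_{h_\ell}a_\ell$, and $0$ otherwise. The divisibility condition rearranges to $\sum_\ell t_{h_\ell}a_\ell\equiv\alpha\pmod{g_J}$, which is exactly the defining condition of $\mathcal{B}_J^{(\alpha)}(e)$. Summing over $\underline{a}\in\mathcal{B}_J^{(\alpha)}(e)$ then yields $|C_J\cap\mathcal{A}^{(\alpha)}|=\theta_J^{(\alpha)}(e)\cdot g_J n^{i-1}$.

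The computation is essentially bookkeeping, and the only points that need a little care are the extreme values of $i$. For $i=0$ one has $J=\emptyset$, $g_\emptyset=n$, and the inner system is empty: the formula $\theta_\emptyset^{(\alpha)}(e)\cdot g_\emptyset\cdot n^{-1}$ collapses to $\theta_\emptyset^{(\alpha)}(e)$, which matches $|C_\emptyset\cap\mathcal{A}^{(\alpha)}|$ directly from the definition of $\mathcal{B}_\emptyset^{(\alpha)}(e)$. For $i=d$, the standing small-group assumption $\gcd(t_1,\ldots,t_d,n)=1$ forces $g_{[d]}=1$, so a single application of Lemma~\ref{idolo} gives $|C_{[d]}\cap\mathcal{A}^{(\alpha)}|=n^{d-1}$, which agrees with the formula under the convention $\theta_{[d]}^{(\alpha)}(e)=1$. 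The only mild subtlety in the argument is the need to invoke Lemma~\ref{idolo} conditionally: one must observe that when $g_J\nmid\alpha-\sum_\ell t_{h_\ell}a_\ell$ the count of admissible $\underline{u}$ is zero, which is precisely why the outer sum collapses to a sum over $\mathcal{B}_J^{(\alpha)}(e)$.
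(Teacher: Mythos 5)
Your proof is correct and follows essentially the same route as the paper: translation invariance of $\mathcal{A}^{(\alpha)}$ by $n\ZZ^d$ for the first equality, and for the second a disjoint decomposition over the coordinates outside $J$ followed by an application of Lemma~\ref{idolo} conditional on the divisibility by $g_J$. Your extra remarks on the edge cases $i=0$ and $i=d$ are a nice addition but the argument is otherwise the same.
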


\begin{proof}
To prove the first equality, let $\underline{s}\in ([0,k-1] \cap \N)^i$ be arbitrary. Then
\begin{align*}
(a_1,\ldots,a_d) \in \left(C_J + nv_{J,\underline{s}}\right) \cap \mathcal{A}^{(\alpha)}  & \Longleftrightarrow \left\{ \begin{array}{ll} (a_1,\ldots,a_d) -nv_{J,\underline{s}} \in C_J \\ \\ t_1a_1+t_2 a_2+\ldots+t_d a_d \equiv \alpha  \mod n \end{array}  \right. \\ \\
& \Longleftrightarrow \left\{ \begin{array}{ll}  
(a_1,\ldots,a_d) -nv_{J,\underline{s}} \in C_J \\ \\ 
t_1(a_1-n(v_{J,\underline{s}})_1)+\ldots+t_d (a_d -n(v_{J,\underline{s}})_d) \equiv \alpha  \mod n \end{array} \right. \\ \\
& \Longleftrightarrow (a_1,\ldots,a_d) - nv_{J,\underline{s}} \in C_J \cap \mathcal{A}^{(\alpha)}
\end{align*}
Since the one described is a one-to-one correspondence between points in the two sets, our claim is proved. In particular, $\left|\left(C_J + nv_{J,\underline{s}}\right) \cap \mathcal{A}^{(\alpha)} \right|$ is independent of $\underline{s}$. To explicitly express the cardinality of these sets, we note that
\begin{align*}
&C_J \cap \mathcal{A}^{(\alpha)}  = \{(a_1,\ldots,a_d) \in C_J : t_1a_1+t_2a_2 + \ldots + t_da_d \equiv \alpha \mod n\} \\ \\
& =\{(a_1,\ldots,a_d) : \sum_{\ell \in J} t_\ell a_\ell \equiv \alpha -\sum_{\ell \notin J} t_\ell a_\ell \mod n, 0 \leq a_\ell \leq n-1 \mbox{ if } \ell \in J, 0 \leq a_\ell \leq r_e-1 \mbox{ if } \ell \notin J\} \\ \\
&= \bigsqcup_{\tiny{\begin{array}{c} 0 \leq a_\ell \leq r_e-1 \\ \ell \notin J\end{array}}} \{(a_{j_1},\ldots,a_{j_i}) \in ([0,n-1]\cap\N)^i :  \sum_{\ell \in J} t_\ell a_\ell \equiv \alpha -\sum_{\ell \notin J} t_\ell a_\ell \mod n\}.
\end{align*}
Observe that, for a given choice of a $(d-i)$-uple $(a_\ell)_{\ell \notin J}$, the congruence $\sum_{\ell \in J} t_\ell a_\ell \equiv \alpha -\sum_{\ell \notin J} t_\ell a_\ell$ modulo $n$ has a solution $(a_{j_1},\ldots,a_{j_i})$ if and only if $g_J$ divides $\alpha-\sum_{\ell \notin J} a_\ell t_\ell$. In turn, this happens if and only if $(a_\ell)_{\ell \notin J} \in \mathcal{B}_J^{(\alpha)}$, as defined in Notation \ref{notation_psi}. For every such $(a_\ell)_{\ell \notin J} \in \mathcal{B}_J^{(\alpha)}$, we have $g_Jn^{i-1}$ incongruent solution, by Lemma \ref{idolo}. Summing up, we have
\begin{align*}
\ds |C_J \cap \mathcal{A}^{(\alpha)}| & = \sum_{\tiny{\begin{array}{c} \ell \notin J\\ (a_\ell) \in \mathcal{B}_J^{(\alpha)} \end{array}}} \left|\left\{(a_{j_1},\ldots,a_{j_i}) \in ([0,n-1] \cap \N)^i  : \sum_{\ell \in J} t_\ell a_\ell \equiv \alpha-\sum_{\ell \notin J} t_\ell a_\ell \mod n\right\}\right| \\ \\
& = \theta_J^{(\alpha)}(e) \cdot \left|\left\{(a_{j_1},\ldots,a_{j_i}) \in ([0,n-1] \cap \N)^i : \sum_{\ell \in J} t_\ell a_\ell \equiv \alpha -\sum_{\ell \notin J} t_\ell a_\ell \mod n \right\}\right| \\
& = \theta_J^{(\alpha)}(e)\cdot g_Jn^{i-1}.
\end{align*}
\end{proof}
\begin{Remark}
To illustrate the statement of Proposition \ref{counting C_J}, we refer to the specific example of Figure \ref{figure}, in the case $t_1=1$, $t_2=2$, and $\alpha=0$. 
\begin{figure}[h]
\begin{tikzpicture}
  [decoration={markings,mark=at position 1 with {\arrow{stealth}}},
   blob/.style={circle,fill,minimum width=4pt,inner sep=0pt},
   flow/.style={postaction={decorate}}
  ]
  \node[draw, fill,star, red, star points=5, star point ratio=.4, minimum size=2.5pt, inner sep=0pt, label=left:0] (O) at (-1, 0) {};
  \node (Y) at (-1, 7) {};
  \node (X) at (6.5,0) {};
  \draw[flow] (O)  -- (Y);
  \draw[flow] (O)  -- (X);
  \node[blob,label=left:1] (y1) at (-1,1) {};
  \node[blob, label=left:2] (y2) at (-1,2) {};
  \node[draw, fill,star, red, star points=5, star point ratio=.4, minimum size=2.5pt, inner sep=0pt, label=left:3] (y3) at (-1,3) {};
  \node[blob, label=left:4] (y4) at (-1,4) {};
  \node[blob, label=left:5] (y5) at (-1,5) {};
  \node[draw, fill,star, red, star points=5, star point ratio=.4, minimum size=2.5pt, inner sep=0pt, label=left:6] (y6) at (-1,6) {};
  \node[blob, label=left:7] (y7) at (-1,7) {};
  \node[blob, label=below:1] (x1) at (0,0) {};
  \node[blob, label=below:2] (x2) at (1,0) {};
  \node[draw, fill,star, red, star points=5, star point ratio=.4, minimum size=2.5pt, inner sep=0pt, label=below:3] (x3) at (2,0) {};
  \node[blob, label=below:4] (x4) at (3,0) {};
  \node[blob, label=below:5] (x5) at (4,0) {};
  \node[draw, fill,star, red, star points=5, star point ratio=.4, minimum size=2.5pt, inner sep=0pt, label=below:6] (x6) at (5,0) {};
  \node[blob, label=below:7] (x7) at (6,0) {};
  \node[draw, fill,star, red, star points=5, star point ratio=.4, minimum size=2.5pt, inner sep=0pt] () at (0,1) {};
  \node[blob] () at (1,1) {};
  \node[blob] () at (2,1) {};
  \node[draw, fill,star, red, star points=5, star point ratio=.4, minimum size=2.5pt, inner sep=0pt] () at (3,1) {};
  \node[blob] () at (4,1) {};
  \node[blob] () at (5,1) {};
  \node[draw, fill,star, red, star points=5, star point ratio=.4, minimum size=2.5pt, inner sep=0pt] () at (6,1) {};
  \node[blob] () at (0,2) {};
  \node[draw, fill,star, red, star points=5, star point ratio=.4, minimum size=2.5pt, inner sep=0pt] () at (1,2) {};
  \node[blob] () at (2,2) {};
  \node[blob] () at (3,2) {};
  \node[draw, fill,star, red, star points=5, star point ratio=.4, minimum size=2.5pt, inner sep=0pt] () at (4,2) {};
  \node[blob] () at (5,2) {};
  \node[blob] () at (6,2) {};
  \node[blob] () at (0,3) {};
  \node[blob] () at (1,3) {};
  \node[draw, fill,star, red, star points=5, star point ratio=.4, minimum size=2.5pt, inner sep=0pt] () at (2,3) {};
  \node[blob] () at (3,3) {};
  \node[blob] () at (4,3) {};
  \node[draw, fill,star, red, star points=5, star point ratio=.4, minimum size=2.5pt, inner sep=0pt] () at (5,3) {};
  \node[blob] () at (6,3) {};
  \node[draw, fill,star, red, star points=5, star point ratio=.4, minimum size=2.5pt, inner sep=0pt] () at (0,4) {};
  \node[blob] () at (1,4) {};
  \node[blob] () at (2,4) {};
  \node[draw, fill,star, red, star points=5, star point ratio=.4, minimum size=2.5pt, inner sep=0pt] () at (3,4) {};
  \node[blob] () at (4,4) {};
  \node[blob] () at (5,4) {};
  \node[draw, fill,star, red, star points=5, star point ratio=.4, minimum size=2.5pt, inner sep=0pt] () at (6,4) {};
  \node[blob] () at (0,5) {};
  \node[draw, fill,star, red, star points=5, star point ratio=.4, minimum size=2.5pt, inner sep=0pt] () at (1,5) {};
  \node[blob] () at (2,5) {};
  \node[blob] () at (3,5) {};
  \node[draw, fill,star, red, star points=5, star point ratio=.4, minimum size=2.5pt, inner sep=0pt] () at (4,5) {};
  \node[blob] () at (5,5) {};
  \node[blob] () at (6,5) {};
  \node[blob] () at (0,6) {};
  \node[blob] () at (1,6) {};
  \node[draw, fill,star, red, star points=5, star point ratio=.4, minimum size=2.5pt, inner sep=0pt] () at (2,6) {};
  \node[blob] () at (3,6) {};
  \node[blob] () at (4,6) {};
  \node[draw, fill,star, red, star points=5, star point ratio=.4, minimum size=2.5pt, inner sep=0pt] () at (5,6) {};
  \node[blob] () at (6,6) {};
  \node[draw, fill,star, red, star points=5, star point ratio=.4, minimum size=2.5pt, inner sep=0pt] () at (0,7) {};
  \node[blob] () at (1,7) {};
  \node[blob] () at (2,7) {};
  \node[draw, fill,star, red, star points=5, star point ratio=.4, minimum size=2.5pt, inner sep=0pt] () at (3,7) {};
  \node[blob] () at (4,7) {};
  \node[blob] () at (5,7) {};
  \node[draw, fill,star, red, star points=5, star point ratio=.4, minimum size=2.5pt, inner sep=0pt] () at (6,7) {};
  \draw[dashed] (-1,0) rectangle (1,2);
  \draw[dashed] (-1,3) rectangle (1,5);
  \draw[dashed] (2,0) rectangle (4,2);
  \draw[dashed] (2,3) rectangle (4,5);
  \draw[dashed] (5,0) rectangle (6,2);
  \draw[dashed] (5,3) rectangle (6,5);
  \draw[dashed] (5,6) rectangle (6,7);
  \draw[dashed] (-1,6) rectangle (1,7);
  \draw[dashed] (2,6) rectangle (4,7);
\end{tikzpicture}
\label{figure2}
\end{figure}

The points inside $(C_J + v_{J,\underline{s}})\cap \mathcal{A}^{(0)} $ are depicted as red stars. Observe that, as stated in Proposition \ref{counting C_J}, the number of red stars contained in each $(C_J  + v_{J,\underline{s}})\cap \mathcal{A}^{(0)}$ is the same for every fixed $J$. For example, if $J=[2]$, there are $\theta_{J}^{(0)}(3)\cdot g_J \cdot 3^{2-1} = 3$ red stars in each region.
\end{Remark}

\subsection{F-signature function of cyclic quotient singularities}
Let $S=\kk\ps{x_1,\ldots,x_d}$, where $\kk$ is an algebraically closed field of characteristic $p>0$. Let $G$ be a finite small cyclic group of order $n$, with $p$ that does not divide $n$, and $R = S^G$ be the ring of invariants under the action of $G$. Given that $\rank_R(M_\alpha) = 1$ for all $0 \leq \alpha \leq n-1$ by Remark \ref{Remark_irreps_cyclic}, Theorem \ref{theorem-Fsignaturefunctionquotient} allows us to write the multiplicity functions as follows:
\[
\ds \mult(M_\alpha,R^{1/p^e}) =  \frac{p^{de}}{n} + \varphi_{d-2}^{(\alpha)} p^{(d-2)e} + \ldots + \varphi_1^{(\alpha)} p^e + \varphi_0^{(\alpha)},
\]
where the functions $\varphi_c$ are bounded and periodic. The main goal of this section is to give a more explicit description of the functions $\varphi_c$ in case $G$ is cyclic. To achieve this goal, we combine the results we obtained in Section \ref{Section_quotient_sing} and Subsection \ref{Subsection_congruence}.
\begin{Theorem}\label{theorem-Fsignaturecyclic} Let $S=\kk\ps{x_1,\ldots,x_d}$, where $\kk$ is an algebraically closed field of characteristic $p>0$. Let $G$ be a finite small cyclic group of order $n$, with $p$ that does not divide $n$, and $R = S^G$ be a $\frac{1}{n}(t_1,\ldots,t_d)$ cyclic quotient singularity. For all $e\in\mathbb{N}$, write $p^e=kn+r_e$, where $0 < r_e < n$. With the notation introduced in \ref{notation_psi}, for $e \in \N$ we have
\[
\ds  \varphi_c^{(\alpha)}(e) = \frac{1}{n} \left[\sum_{i=c}^d (-1)^{i-c}{i \choose c}\psi_i^{(\alpha)}r_e^{i-c}\right].
\]
\end{Theorem}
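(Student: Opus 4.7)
The plan is to combine the geometric description of $\mult(M_\alpha, R^{1/p^e})$ from Proposition \ref{prop-Fsignatureofcyclic} with the partition of Lemma \ref{partition} and the cardinality computation of Proposition \ref{counting C_J}, and then to read off the coefficients $\varphi_c^{(\alpha)}(e)$ via a binomial expansion. Explicitly, I would first write
\[
\mult(M_\alpha,R^{1/p^e}) = \bigl|([0,p^e-1]\cap\N)^d \cap \mathcal{A}^{(\alpha)}\bigr| = \sum_{i=0}^d \sum_{J \in \Gamma_i} \sum_{\underline{s}\in ([0,k-1]\cap\N)^i} \bigl|(C_J+nv_{J,\underline{s}}) \cap \mathcal{A}^{(\alpha)}\bigr|.
\]
By Proposition \ref{counting C_J} the innermost cardinality equals $\theta_J^{(\alpha)}(e)\,g_J\,n^{i-1}$, independently of $\underline{s}$, and since there are $k^i$ choices of $\underline{s}$ (with the convention $k^0=1$ corresponding to the singleton $\{\star\}$), this collapses to
\[
\mult(M_\alpha,R^{1/p^e}) = \sum_{i=0}^d k^i\,n^{i-1}\sum_{J\in\Gamma_i} g_J\theta_J^{(\alpha)}(e) = \frac{1}{n}\sum_{i=0}^d (kn)^i\,\psi_i^{(\alpha)}(e).
\]

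The next step is to substitute $kn = p^e - r_e$ and apply the binomial theorem, which yields
\[
\mult(M_\alpha,R^{1/p^e}) = \frac{1}{n}\sum_{i=0}^d \psi_i^{(\alpha)}(e)(p^e-r_e)^i = \frac{1}{n}\sum_{c=0}^d \left[\sum_{i=c}^d (-1)^{i-c}\binom{i}{c}\psi_i^{(\alpha)}(e)\,r_e^{i-c}\right] p^{ce}.
\]
To conclude, I would compare this expression with the quasi-polynomial expansion $\mult(M_\alpha,R^{1/p^e}) = \sum_{c=0}^d \varphi_c^{(\alpha)}(e)\, p^{ce}$ from Theorem \ref{theorem-Fsignaturefunctionquotient}. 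Identification of the coefficients is justified by the following uniqueness argument: since $\varphi_c^{(\alpha)}$, $\psi_i^{(\alpha)}$, and $e \mapsto r_e$ are all periodic of period dividing $|G|-1$, fixing a residue class of $e$ modulo a common period turns both sides into honest polynomials in $p^e$ with constant coefficients, which must then agree by the linear independence of $1, p^e, p^{2e}, \ldots, p^{de}$ (distinct growth rates).

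I expect the argument to be essentially bookkeeping: once the partition and counting formulas are in place, the theorem is a routine assembly. The principal point to watch is the edge cases $i=0$ and $i=d$, where the conventions $g_\emptyset = n$, $\theta_{[d]}^{(\alpha)}=1$, and $|\Gamma_0|=|\Gamma_d|=1$, together with the factor $k^i$, must conspire to yield the correct leading term $p^{de}/n$ and constant term $\theta_\emptyset^{(\alpha)}(e)$. A quick sanity check against the Veronese case ($t_1=\cdots=t_d=1$) confirms the computation, since $\sum_{i=1}^d \binom{d}{i}(p^e-r_e)^i r_e^{d-i} = p^{de}-r_e^d$, recovering the Veronese formula anticipated in the introduction.
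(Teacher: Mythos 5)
Your proof is correct and follows essentially the same route as the paper: combine Proposition \ref{prop-Fsignatureofcyclic}, Lemma \ref{partition}, and Proposition \ref{counting C_J} to obtain $\mult(M_\alpha,R^{1/p^e}) = \frac{1}{n}\sum_{i=0}^d (p^e-r_e)^i\psi_i^{(\alpha)}(e)$, expand binomially, and reorder the double sum to read off the coefficients of $p^{ce}$. The only difference is that you spell out the uniqueness argument for identifying the $\varphi_c^{(\alpha)}$ (via periodicity and linear independence of the powers $p^{ce}$), a step the paper leaves implicit; this is a sound and welcome addition rather than a deviation.
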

\begin{proof}
Combining Proposition \ref{prop-Fsignatureofcyclic}, Lemma \ref{partition} and Proposition \ref{counting C_J} we see that
\begin{align*}
\ds \mult(M_\alpha,R^{1/p^e}) & = |[0,p^e-1]^d \cap \mathcal{A}^{(\alpha)}| \\ \\
& = \left| \bigsqcup_{i=0}^d \bigsqcup_{J \in \Gamma_i} \left(\bigsqcup_{\tiny{ \underline{s} \in ([0,k-1] \cap \N)^i}} ((C_J + nv_{J,\underline{s}}) \cap \mathcal{A}^{(\alpha)}) \right)\right| \\ \\
& = \sum_{i=0}^d \sum_{J \in \Gamma_i} \left(\sum_{\tiny{ \underline{s} \in ([0,k-1] \cap \N)^i}} |(C_J + nv_{J,\underline{s}}) \cap \mathcal{A}^{(\alpha)}| \right) \\
& = \sum_{i=0}^d \sum_{J \in \Gamma_i} k^i|C_J \cap \mathcal{A}^{(\alpha)}| & \mbox{ by Proposition \ref{counting C_J}} \\ \\
& = \sum_{i=0}^d \sum_{J \in \Gamma_i} k^i\theta_J^{(\alpha)}g_Jn^{i-1} & \mbox{ by Proposition \ref{counting C_J}} \\ \\
& = \sum_{i=0}^d  k^in^{i-1} \psi_i^{(\alpha)}.
\end{align*}
Now recall that $k=\frac{p^e-r_e}{n}$, so that $k^i = \sum_{c=0}^i (-1)^{i-c}{i \choose c} p^{ce}r_e^{i-c}$. Substituting this into the formula gives
\begin{align*}
\ds \mult(M_\alpha,R^{1/p^e}) & = \sum_{i=0}^d \left(\frac{p^e-r_e}{n}\right)^i n^{i-1} \psi_i^{(\alpha)}\\ \\
& = \frac{1}{n}\sum_{i=0}^d\sum_{c=0}^i(-1)^{i-c}{i \choose c}\psi_i^{(\alpha)}r_e^{i-c}p^{ce} \\ \\
& =  \sum_{c=0}^d\frac{1}{n}\left[\sum_{i=c}^d (-1)^{i-c}{i \choose c}\psi_i^{(\alpha)}r_e^{i-c} \right]p^{ce} & \mbox{ changing the order of the sum}.
\end{align*}
From this expression, if follows that  $\varphi_c^{(\alpha)} = \frac{1}{n}\left[\sum_{i=c}^d (-1)^{i-c}{i \choose c}\psi_i^{(\alpha)}r_e^{i-c} \right]$, as desired.
\end{proof}

More generally, we have seen in Section \ref{Section_quotient_sing} that the existence of $c$-pseudoreflections inside $G$ determines the vanishing of the higher coefficients of $\mult(M_\alpha,R^{1/p^e})$. In the case of cyclic quotient singularities, we can relate this fact to the values $g_J$, for $J \in \Gamma_c$. We prove this fact in Theorem \ref{theorem-peggiodelcasomonomiale}.
Before that, we need the following lemma which follows from well-known identities between binomial coefficients.

\begin{Lemma} \label{lemma2} Given integers $0 \leq c \leq d-1$, we have
\[
\ds \sum_{i=c}^d (-1)^{i-c} {d \choose i}{i \choose c} = 0.
\]
\end{Lemma}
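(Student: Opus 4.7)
My plan is to reduce the identity to the standard alternating-sum identity $\sum_{j=0}^{m}(-1)^j\binom{m}{j}=0$ for $m\geq 1$, via the subset-of-a-subset identity $\binom{d}{i}\binom{i}{c}=\binom{d}{c}\binom{d-c}{i-c}$.

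First, I would rewrite each term in the sum using the trinomial-revision identity: choosing an $i$-subset of $[d]$ and then a $c$-subset of that $i$-subset is the same as first choosing the $c$-subset of $[d]$ and then choosing the remaining $i-c$ elements from the complementary $d-c$ elements, so
\[
\binom{d}{i}\binom{i}{c}=\binom{d}{c}\binom{d-c}{i-c}.
\]
Substituting, the sum becomes
\[
\sum_{i=c}^{d}(-1)^{i-c}\binom{d}{i}\binom{i}{c}=\binom{d}{c}\sum_{i=c}^{d}(-1)^{i-c}\binom{d-c}{i-c}.
\]

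Next, I would perform the change of variable $j=i-c$, which ranges over $0,1,\ldots,d-c$. The inner sum becomes
\[
\sum_{j=0}^{d-c}(-1)^{j}\binom{d-c}{j}=(1-1)^{d-c}=0,
\]
where I applied the binomial theorem to $(1-1)^{d-c}$. Here the hypothesis $c\leq d-1$ is essential, because it guarantees $d-c\geq 1$ so that $(1-1)^{d-c}=0$; for $c=d$ the sum would reduce to a single nonzero term. The whole expression is therefore $\binom{d}{c}\cdot 0=0$, which is the claimed identity.

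There is no real obstacle here: the proof is a one-line manipulation once the subset-of-a-subset identity is invoked. The only thing to flag is where the assumption $c\leq d-1$ is used, which is precisely at the step $(1-1)^{d-c}=0$.
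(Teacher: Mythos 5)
Your proof is correct, and since the paper states the lemma ``follows from well-known identities between binomial coefficients'' without providing a proof, your argument via trinomial revision $\binom{d}{i}\binom{i}{c}=\binom{d}{c}\binom{d-c}{i-c}$ followed by $(1-1)^{d-c}=0$ is exactly the standard argument the authors have in mind. You also correctly flag where the hypothesis $c\leq d-1$ is used.
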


The following Theorem can be viewed as an improvement of Proposition \ref{prop-Fsignaturefuncionquotient}. Recall that $\theta_J$, $\varphi_i$ and $\psi_i$ denote the functions $\theta_J^{(0)}$, $\varphi_i^{(0)}$ and $\psi_i^{(0)}$ , respectively.

\begin{Theorem}\label{theorem-peggiodelcasomonomiale}
With the notation of Theorem \ref{theorem-Fsignaturecyclic}, consider an integer $1 \leq c \leq d-1$. Then the functions $\varphi_{d-1},\ldots,\varphi_{c}$ are identically zero if and only if  $g_J = 1$ for all $J \in \Gamma_{c}$. Moreover, if $\varphi_\ell$ is the first non-vanishing coefficient with $0 \leq \ell < d$, and $p^e=kn+r_e$, then 
\[
\ds \varphi_\ell = \frac{-{d \choose \ell}r_e^{d-\ell} + \psi_\ell}{n}.
\]
\end{Theorem}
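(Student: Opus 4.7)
The plan is to feed the explicit formula
\[
\varphi_c(e) = \frac{1}{n}\sum_{i=c}^d (-1)^{i-c}\binom{i}{c}\psi_i(e)\,r_e^{i-c}
\]
from Theorem \ref{theorem-Fsignaturecyclic} (with $\alpha = 0$) into two elementary observations: (i) if $J \subseteq I$ then $g_I \mid g_J$, so $g_J = 1$ for all $J \in \Gamma_c$ forces $g_I = 1$ for every $I$ with $|I| \geq c$; and (ii) by Remark \ref{rem_psi}, the moment $g_J = 1$ we have $\theta_J(e) = r_e^{d-|J|}$, so once all size-$i$ gcds vanish we get $\psi_i = \binom{d}{i}r_e^{d-i}$. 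The proof of the equivalence then reduces to a binomial identity telescoping via Lemma \ref{lemma2}; the displayed formula for the first non-vanishing $\varphi_\ell$ will fall out as a byproduct of the induction step.

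For the \emph{if} direction, suppose $g_J = 1$ for every $J \in \Gamma_c$. By (i)--(ii), $\psi_i = \binom{d}{i}r_e^{d-i}$ for all $i \geq c$. Substituting into the formula above and factoring out $r_e^{d-c'}$, I obtain, for each $c \leq c' \leq d-1$,
\[
\varphi_{c'}(e) = \frac{r_e^{d-c'}}{n}\sum_{i=c'}^d (-1)^{i-c'}\binom{i}{c'}\binom{d}{i},
\]
which vanishes by Lemma \ref{lemma2}.

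For the converse, I will run a descending induction on $c$. The base case $c=d-1$ is vacuous: by smallness of $G$, we already have $g_J = 1$ for every $J \in \Gamma_{d-1}$, and $\varphi_{d-1} \equiv 0$ is Proposition \ref{prop-Fsignaturefuncionquotient}. For the inductive step, assume the equivalence holds at level $c+1$. If $\varphi_{d-1}, \ldots, \varphi_c$ all vanish, the inductive hypothesis applied at level $c+1$ gives $g_J = 1$ for all $|J| \geq c+1$, hence $\psi_i = \binom{d}{i}r_e^{d-i}$ for $i \geq c+1$. Plugging this into the formula for $\varphi_c$, isolating the $i=c$ term, and using Lemma \ref{lemma2} (with index $c$) to rewrite the remaining sum, the expression collapses to
\[
\varphi_c(e) = \frac{\psi_c(e) - \binom{d}{c}r_e^{d-c}}{n},
\]
which is precisely the advertised formula for the first non-vanishing coefficient.

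To finish the converse, I will specialize to an exponent $e$ with $p^e \equiv 1 \pmod n$, which exists since $\gcd(p,n)=1$. Then $r_e = 1$ and, by Remark \ref{rem_psi}, $\theta_J(e) = 1$ for every $J$, so $\psi_c(e) = \sum_{J \in \Gamma_c} g_J$. The vanishing $\varphi_c(e) = 0$ now reads $\sum_{J \in \Gamma_c} g_J = \binom{d}{c} = |\Gamma_c|$; since every $g_J$ is a positive integer, equality forces $g_J = 1$ for all $J \in \Gamma_c$, completing the induction. The only genuinely delicate point is keeping the binomial bookkeeping straight when one peels off the $i=c$ summand in the inductive step and applies Lemma \ref{lemma2}; everything else is a direct evaluation.
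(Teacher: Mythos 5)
Your proof is correct, and its overall architecture matches the paper's: a descending induction starting from the $c=d-1$ base case (using smallness of $G$), with the key reduction step of specializing to exponents $e$ with $r_e=1$ so that $\theta_J(e)=1$ and $\psi_c(e)=\sum_{J\in\Gamma_c}g_J$, then forcing $g_J=1$ from the positivity of the $g_J$'s. The one place you genuinely diverge is the ``if'' direction. The paper deduces from $g_J=1$ (all $J\in\Gamma_c$) that $G$ contains no $i$-pseudoreflections for $c\le i\le d-1$ and then invokes Theorem \ref{theorem-Fsignaturefunctionquotient}; this requires a small (elided) argument relating the eigenvalue pattern of $g^j$ to the integers $g_I$. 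You instead substitute $\psi_i=\binom{d}{i}r_e^{d-i}$ directly into the formula of Theorem \ref{theorem-Fsignaturecyclic} and kill the sum with Lemma \ref{lemma2}, which is more self-contained and purely combinatorial -- it never leaves the cyclic/lattice-point picture to talk about pseudoreflections. A further minor organizational gain: by proving the collapsed identity $\varphi_c=(\psi_c-\binom{d}{c}r_e^{d-c})/n$ inside the inductive step for general $e$, you obtain the final displayed formula for the first non-vanishing $\varphi_\ell$ essentially for free, rather than rederiving it separately as the paper does.
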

 
\begin{proof}
Assume that $g_J = 1$ for all $J \in \Gamma_{c}$. This implies that $g_J=1$ for all $J \in \Gamma_i$ and $c \leq i \leq d$. It is then easy to see that there are no $i$-pseudoreflections for all $c \leq i \leq d-1$. By Theorem \ref{theorem-Fsignaturefunctionquotient} we conclude that $\varphi_i=0$ for all $c \leq i \leq d-1$.

We now prove the converse. Fix $e\in\mathbb{N}$ such that $r_e=1$. For such a value of $e$, by Theorem \ref{theorem-Fsignaturecyclic} we can express all the coefficients $\varphi_c$ as follows:
\begin{align*}
\ds \varphi_{c}(e) & = \frac{1}{n} \left[\sum_{i=c}^d (-1)^{i-c}{i \choose c}\psi_i(e)\right].
\end{align*} 
In addition, again because $r_e=1$, Remark \ref{rem_psi} gives that $\theta_J(e) = 1$ for all $J \subseteq [d]$ and $e \in \N$. It follows that, $\psi_i(e)= \sum_{J \in \Gamma_i} g_J$ for all $0 \leq i \leq d$. 
Observe that, for every $i$, we have $|\Gamma_i| = {d \choose i}$, and $g_J \geq 1$ for all $J \in \Gamma_i$. Therefore, we always have an inequality $\psi_i \geq {d \choose i}$, with equality that holds if and only if $g_J=1$ for all $J \in \Gamma_i$.

Note that $g_J=1$ for all $J \in \Gamma_{d-1}$, since $G$ is assumed to be small. This will be the base case of our induction. Now let $d-2 \geq c \geq 1$, and assume that $g_J = 1$ for all $J \in \Gamma_{c+1}$. Our previous observation implies that $\psi_i={d \choose i}$ for all $i \geq c+1$. The formula for $\varphi_c$ now gives 
\begin{align*}
\ds \varphi_{c}(e) & = \frac{1}{n}\left[\sum_{i=c}^d (-1)^{i-c}{i \choose c}\psi_i\right] \\ \\
& = \frac{1}{n}\left[\sum_{i=c+1}^d (-1)^{i-c}{i \choose c}{d \choose i} + \psi_{c} \right] \\ \\
& = \frac{1}{n} \left[- {d \choose c}+\psi_{c}\right] & \mbox{ by Lemma \ref{lemma2}.}
\end{align*}
Since $\varphi_{c}(e)=0$ by assumption, we conclude that $\psi_{c} = {d \choose c}$ and, using again the observation made above, we conclude that $g_J=1$ for all $J \in \Gamma_c$, as desired.

For the last part of the theorem, let $\varphi_\ell$ be the first non-zero coefficient, with $0 \leq \ell <d$. By what shown above, we have that $g_J = 1$ for all $J \in \Gamma_{\ell+1}$, and then $\theta_J(e) = r_e^{d-i}$ for all $J \in \Gamma_i$ with $d \geq i \geq \ell+1$ and $e \in \N$, by Remark \ref{rem_psi}. It follows that $\psi_i = {d \choose i}r_e^{d-i}$, again for $d \geq i \geq \ell+1$ and $e \in \N$. By Theorem \ref{theorem-Fsignaturecyclic}, for all $e \in \N$ we finally have that
\begin{align*}
\ds \varphi_{\ell}(e) & = \frac{1}{n}\left[\sum_{i=\ell}^d (-1)^{i-\ell}{i \choose c}\psi_i\right] \\ \\
& = \frac{1}{n}\left[\sum_{i=\ell+1}^d (-1)^{i-\ell}{i \choose \ell}{d \choose i}r_e^{d-i}r_e^{i-\ell} + \psi_{\ell} \right] \\ \\
& = \frac{1}{n}\left[r_e^{d-\ell}\sum_{i=\ell+1}^d (-1)^{i-\ell}{i \choose \ell}{d \choose i} + \psi_{\ell} \right] \\ \\
& = \frac{1}{n} \left[- {d \choose \ell}r_e^{d-\ell}+\psi_{\ell}\right] & \mbox{ by Lemma \ref{lemma2}.}
\end{align*}
\end{proof}

Proposition \ref{prop-Fsignaturefuncionquotient} shows that $\varphi_c=0$ for some $0 \leq c \leq d-2$ implies that $G$ contains no $c$-pseudoreflections. However, it is not true that if $\varphi_c=0$ for one single such $c$, then $g_J=1$ for all $J\in \Gamma_c$. Consider the following example.

\begin{Example} \label{ex-pseudoreflectionsVSgJ=1}
We fix $n=st$, where $s,t>1$ are integers, and consider the $\frac{1}{n}(1,1,t,t)$-cyclic singularity $R$ over an algebraically closed field $\kk$ of characteristic $p\nmid n$. Clearly, we have $g_{\{t\}}=t>1$. We show that the coefficient $\varphi_1$ of $p^e$ in the F-signature function of $R$ is $0$.
This also follows from Theorem \ref{theorem-Fsignaturefunctionquotient}, since the cyclic group $G=\frac{1}{n}(1,1,t,t)$ does not contain $1$-pseudoreflections.
	We show it using the formula 
	\[
	\ds \varphi_1 = \frac{1}{n} \left[\sum_{i=1}^d (-1)^{i-1}{i \choose 1}\psi_ir_e^{i-1}\right]
	\]
	given by Theorem \ref{theorem-Fsignaturecyclic}. 
Let $p^e=kn+r_e$, with $0< r_e<n$.
Since $g_{J}=1$ for $J\in\Gamma_4$ and $J\in\Gamma_3$, we have $\psi_4=1$ and $\psi_3=4r_e$.
	For $j=2$, we have $g_{\{1,1\}}=g_{\{1,t\}}=1$, and $\theta_{\{1,1\}}=\theta_{\{1,t\}}=r_e^2$, so $\psi_2=5r_e^3+r_e\theta_{\{t,t\}}g_{\{t,t\}}$.
	For $j=1$, we have 	$g_{\{1\}}=1$, and $\theta_{\{1\}}=r_e^3$, so $\psi_1=2r_e^3+2\theta_{\{t\}}g_{\{t\}}$.
	Now, observe that $\theta_{\{t\}}$ counts the number of triples $a_1,a_2,a_3\in\{0,\dots,r_e-1\}$ such that $a_1+a_2+ta_3\equiv 0$ modulo $g_{\{t\}}=t$. This is $r_e$-times the number of couples $a_1,a_2\in\{0,\dots,r_e-1\}$ such that $a_1+a_2\equiv 0$ modulo $t=g_{\{t,t\}}$, that is, $\theta_{\{t,t\}}$. Thus, we obtain $\theta_{\{t\}}=r_e\theta_{\{t,t\}}$. Finally, the coefficient of $p^e$ in the F-signature function of $R$ is
	\[\begin{split}
	\varphi_1 &= \frac{1}{n} \left[\sum_{i=1}^d (-1)^{i-1}{i \choose 1}\psi_ir_e^{i-1}\right]=\frac{1}{n}\left[-4r_e^3+12r_e^3-10r_e^3-2r_e\theta_{\{t,t\}}t+2r_e^3+2\theta_{\{t\}}t\right]\\
	&= \frac{1}{n}\left[-2r_e\theta_{\{t,t\}}+2r_e\theta_{\{t,t\}}\right]=0.
	\end{split}
	\]
	However, notice that in this case $\varphi_2\neq0$, as $G$ contains a $2$-pseudoreflection.
	For example, choose $n=6$, $t=3$, and $p\equiv 1$ modulo $6$, so that $r_e=1$ for all $e\in\mathbb{N}$. Then the F-signature function of the $\frac{1}{6}(1,1,3,3)$-singularity is the polynomial in $p^e$ given by $FS(e)=\frac{1}{6}p^{4e}+\frac{1}{3}p^{2e}+\frac{1}{2}$.
	
\end{Example}

Theorem \ref{theorem-peggiodelcasomonomiale} relates the vanishing of the coefficients $\varphi_c=\varphi_c^{(0)}$ of $\mult(R,R^{1/p^e})$ to the invariants $g_J$ of the group $G$. Since Theorem \ref{theorem-Fsignaturecyclic} gives analogous formulas for $\mult(M_\alpha,R^{1/p^e})$ when $\alpha \ne 0$, one may expect that similar considerations about the vanishing coefficients $\varphi_c^{(\alpha)}$ may hold true. It turns out that the vanishing of a coefficient $\varphi_c^{(\alpha)}$ for $\alpha \ne 0$ is, in general a weaker condition than the vanishing of $\varphi_c^{(0)}$. In fact, even the vanishing of all the coefficients $\varphi_i^{(\alpha)}$ for $c \leq i \leq d-1$ does not imply that $G$ has no $c$-pseudoreflections.

To better illustrate what can be said in this direction, consider the following conditions:
\begin{enumerate}
\item $g_J = 1$ for all $J \in \Gamma_c$.
\item $G$ does not have any $i$-pseudoreflections (that is, $G_i=\emptyset$) for all $c \leq i \leq d-1$.
\item The function $\varphi_i^{(0)}$ is identically zero for all $c \leq i \leq d-1$.
\item The function $\varphi_i^{(\alpha)}$ is identically zero for all $c \leq i \leq d-1$ and all $0 \leq \alpha \leq n-1$.
\item The function $\varphi_i^{(\alpha)}$ is identically zero for all $c \leq i \leq d-1$ and some $0 \leq \alpha \leq n-1$.
\end{enumerate}

Our previous results show that the first four conditions are equivalent, and clearly (4) implies (5). However, (5) does not imply (1) -- (4), as the following example shows.
\begin{Example} \label{ex-vanishingalpha}
Let $R$ be the $\frac{1}{6}(1,2,3)$ cyclic quotient singularity over an algebraically closed field $\kk$ of characteristic $p \equiv 1$ modulo $6$. Observe that $r_e=1$ for all $e\in\mathbb{N}$, hence the functions $e \mapsto \mult(M_\alpha,R^{1/p^e})$ will actually be polynomials in $p^e$. Using Theorem \ref{theorem-Fsignaturecyclic}, one can compute
\[
\ds \mult(M_\alpha,R^{1/p^e}) = \left\{\begin{array}{ll} \frac{p^{3e}}{6} + \frac{p^e}{2} + \frac{1}{3} & \mbox{ if } \alpha=0 \\ \\
\frac{p^{3e}}{6} - \frac{p^e}{3} + \frac{1}{6}  & \mbox{ if } \alpha=1,5 \\ \\
\frac{p^{3e}}{6} - \frac{1}{6} & \mbox{ if } \alpha=2, 4 \\ \\ 
\frac{p^{3e}}{6} + \frac{p^e}{6} - \frac{1}{3}  & \mbox{ if } \alpha=3
\end{array}
\right.
\]
In particular, since $\varphi_1^{(2)} = \varphi_2^{(2)}=0$ but $\varphi_1^{(0)} \ne 0$, this shows that (5) does not imply (3).
\end{Example}

\begin{Corollary} \label{coroll_gcd1} Let $R$ be a $\frac{1}{n}(t_1,\ldots,t_d)$-cyclic singularity over an algebraically closed field of characteristic $p>0$. If $g_J = \gcd(t_\ell,n) = 1$ for all $1 \leq \ell \leq d$ then, with $p^e=kn+r_e$, the multiplicity functions $e \mapsto \mult(M_\alpha,R^{1/p^e})$ can be written in the form
\[
\ds \mult(M_\alpha, R^{1/p^e}) = \frac{p^{de}-r_e^d}{n} + \theta_{\emptyset}^{(\alpha)}(e),
\]
where $\theta_{\emptyset}^{(\alpha)}(e)=\left|\left\{(a_1,\dots,a_d)\in([0,r_e-1] \cap \N)^d: \ t_1a_1+\cdots+t_da_d \equiv \alpha \mod n \right\}\right|$.
In particular, this applies to the case of a Veronese rings, which correspond to the choice $t_\ell=1$ for all $\ell$.
\end{Corollary}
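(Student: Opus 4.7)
The plan is to deduce the formula directly from Theorem \ref{theorem-Fsignaturecyclic}, exploiting that under the hypothesis $\gcd(t_\ell,n)=1$ for all $\ell$, the invariants $g_J$ collapse to trivial values. First I would observe that for every non-empty $J \subseteq [d]$, one has $g_J = 1$: choosing any index $\ell \in J$, the integer $g_J$ divides $\gcd(t_\ell,n) = 1$. The only surviving value is $g_\emptyset = n$.

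With $g_J = 1$ for $|J| \geq 1$, Remark \ref{rem_psi} yields $\theta_J^{(\alpha)}(e) = r_e^{d-i}$ for every $J \in \Gamma_i$ with $i \geq 1$, and hence $\psi_i^{(\alpha)}(e) = {d \choose i} r_e^{d-i}$. For $i=0$, the only subset is $\emptyset$, so $\psi_0^{(\alpha)}(e) = n\,\theta_\emptyset^{(\alpha)}(e)$ directly by definition of $\psi_0^{(\alpha)}$.

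To assemble the answer, I would invoke the intermediate identity
\[
\mult(M_\alpha, R^{1/p^e}) = \sum_{i=0}^d k^i n^{i-1}\psi_i^{(\alpha)}(e),
\]
which appears inside the proof of Theorem \ref{theorem-Fsignaturecyclic}. Plugging in the computed values of $\psi_i^{(\alpha)}$, the $i=0$ term contributes precisely $\theta_\emptyset^{(\alpha)}(e)$, while
\[
\sum_{i=1}^d k^i n^{i-1}{d \choose i}r_e^{d-i} = \frac{1}{n}\left[\sum_{i=0}^d {d \choose i}(kn)^i r_e^{d-i} - r_e^d\right] = \frac{(kn+r_e)^d - r_e^d}{n} = \frac{p^{de} - r_e^d}{n}
\]
by the binomial theorem, using $p^e = kn + r_e$. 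Adding the two contributions yields the claimed formula, and the Veronese case corresponds to $t_\ell = 1$ for all $\ell$. There is no real obstacle here: once the reduction $g_J = 1$ for non-empty $J$ is noted, the argument is a pure binomial calculation and everything else was done in the preceding theorem.
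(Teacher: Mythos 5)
Your proof is correct and follows the intended deduction from Theorem \ref{theorem-Fsignaturecyclic}: the paper leaves this corollary without an explicit proof, and your argument — noting $g_J=1$ for all non-empty $J$, substituting $\psi_i^{(\alpha)}={d\choose i}r_e^{d-i}$ for $i\ge1$ and $\psi_0^{(\alpha)}=n\theta_\emptyset^{(\alpha)}$ into the intermediate identity $\mult(M_\alpha,R^{1/p^e})=\sum_i k^i n^{i-1}\psi_i^{(\alpha)}$, and collapsing via the binomial theorem — is exactly the natural route and matches what the paper expects the reader to do.
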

\begin{Remark} As already noted in Remark \ref{rem_graded} for the results of Section \ref{Section_quotient_sing}, 
analogous versions of Theorems \ref{theorem-Fsignaturecyclic} and \ref{theorem-peggiodelcasomonomiale}, as well as of Corollary \ref{coroll_gcd1}, hold in the graded setup.
\end{Remark}

\section{Examples}\label{section:examples}
In this section, we present several examples in order to show how our results can be used to compute the F-signature function of specific quotient singularities.

\begin{Example}[singularity $E_6$] \label{Ex_E6}
	Let $\kk$ be an algebraically closed field with $\chara\kk=p\neq2,3$. The binary tetrahedral group $BT$ of $\kk$ is the subgroup of $\mathrm{Sl}(2,\kk)$ of order $24$ generated by the matrices
	\begin{equation*}
	A=\begin{pmatrix}
	i_{\kk} & 0 \\ 0 & i_{\kk}^3
	\end{pmatrix}, \ 
	B=\begin{pmatrix}
	0 & i_{\kk} \\ i_{\kk} & 0
	\end{pmatrix}, \
	C=\frac{1}{\sqrt 2}\begin{pmatrix}
	\xi_{\kk} & \xi_{\kk}^3 \\ \xi_{\kk} & \xi_{\kk}^7
	\end{pmatrix}, 
	\end{equation*}
	where $\sqrt 2$ denotes a square root of $2$ in $\kk$, 	$i_{\kk}$ is a primitive $4$-th root of $1$, and $\xi_{\kk}$ is a primitive $8$-th root of $1$.
	The quotient singularity $R=\kk\llbracket u,v \rrbracket^{BT}$ is called $E_6$ singularity, and is isomorphic to the hypersurface $\kk\llbracket x,y,z \rrbracket/(x^2+y^3+z^4)$. 
	We compute its signature function using Theorem \ref{theorem-Fsignaturefunctionquotient}.
	Fix $e\in\mathbb{N}$. 
	The group $BT$ consists of one $2$-pseudoreflection (the identity matrix $I$) and $23$ $0$-pseudoreflections. 
	Therefore, we only need to compute
	\begin{equation}\label{eq-phi0singularityE6}
	\varphi_0(e)=\frac{1}{24}\sum_{g\neq I}\sum_{0\leq a,b<p^e}(\xi_{g,e,1})^{a}(\xi_{g,e,2})^{b},
	\end{equation}
	where  $\xi_{g,e,i}=\phi((\lambda_{g,i})^{1/p^e})\in\mathbb{C}$ and $\lambda_{g,1},\lambda_{g,2}\in\kk$ are the eigenvalues of $g\in BT$.
	Now, observe that
	\begin{enumerate}
		\item since $BT\subseteq\mathrm{Sl}(2,\kk)$ we have $\lambda_{g,2}=\lambda_{g,1}^{-1}$ for all $g\in BT$;
		\item two conjugate matrices have the same eigenvalues;
		\item $(-)^{1/p^e}$ and $\phi$ are group homomorphisms, therefore $\xi_{g,e,i}$ is a root of unity in $\mathbb{C}$ of the same order of $\lambda_{g,i}$ which is the same order of $g$.
	\end{enumerate}
	So we can split the sum over the elements of the group in \eqref{eq-phi0singularityE6} by conjugacy classes.
	In particular, $BT$ has one element conjugate to $-I$, $6$ elements conjugate to $B$, $4$ elements conjugate to $C$, $4$ to $C^2$, $4$ to $C^4$, and $4$ to $C^5$.
	Thus, we can rewrite \eqref{eq-phi0singularityE6} as
	\begin{equation*}
	\varphi_0(e)=\frac{1}{24}\sum_{0\leq a,b<p^e}\left((-1)^a(-1)^b+6i^a(-i)^b+8\eta^a\eta^{-b}+8\eta^{2a}\eta^{-2b}\right),
	\end{equation*}
	where $i\in\mathbb{C}$ and $\eta\in\mathbb{C}$ is a primitive $6$-th root of $1$.
	Notice that $\varphi_0(e)=\varphi_0(e_1)$ if $p^e\equiv p^{e_1}$ modulo $12$.
	Since $\gcd(p,24)=1$, the only possible values of $p^e$ modulo $12$ are $1,5,7$ and $11$.
	It is straightforward to check that for these values we have always $\varphi_0(e)=\frac{23}{24}$.
	Therefore, the F-signature function of the $E_6$ singularity is
	\begin{equation*}
	FS(e)=\frac{1}{24}p^{2e}+\frac{23}{24},
	\end{equation*}
	in accordance with Brinkmann's result \cite{Brinkmann}.
	In a similar way, one may compute the F-signature function of the quotient singularities $E_7$ and $E_8$.
\end{Example}

\begin{Example}[$3$-rd Veronese subring of the singularity $D_4$] \label{Ex_3-VeroneseD6}
	Let $\kk$ be an algebraically closed field with $\chara\kk=p\neq2,3$.
	We consider the group $G$ obtained as extension of the binary dihedral group $BD_2$ generated by the matrices $A$ and $B$ of Example \ref{Ex_E6} by the cyclic group $C_3$ of order $3$ generated by the matrix $\mathrm{diag}(\omega_{\kk},\omega_{\kk})$, where $\omega_{\kk}\in\kk$ is a primitive $3$-rd root of unity.
	In other words, we have a short exact sequence of finite groups
	\begin{equation*}
	1\rightarrow BD_2\rightarrow G\rightarrow C_3\rightarrow 1.
	\end{equation*}
	We can describe this group as $G=\{M\cdot N:\ M\in BD_2, \ N\in C_3\}$.
	In particular, it follows that since $BD_2$ has order $8$, $G$ has order $24$.
	Notice, however that $G$ is not isomorphic to the group $BT$ of Example \ref{Ex_E6}, since for example it contains an element of order $12$, while $BT$ does not.
	The corresponding quotient singularity  $R=\kk\llbracket u,v \rrbracket^{G}\cong(\kk\llbracket u,v\rrbracket^{BD_2})^{C_3}$ can be seen as a $3$-rd Veronese subring of the Kleinian singularity $D_4$. 
	More explicitly, we have $R=\kk\llbracket u^{12} + v^{12}, u^6v^6, u^{15}v^3 - u^3v^{15}\rrbracket$.
	We compute the F-signature function of $R$ using Theorem \ref{theorem-Fsignaturefunctionquotient}.
	We proceed as in Example \ref{Ex_E6}, and we obtain that if $ p\equiv 1\mod 6$ then 
	\begin{equation*}
	FS(e)=\frac{1}{24}p^{2e}+\frac{23}{24},
	\end{equation*}
	 and if  $ p\equiv 5\mod 6$ then
	\begin{equation*}
	FS(e)=\begin{cases}
	\frac{1}{24}p^{2e}+\frac{23}{24} \ \text{ for } e \text{ even},\\ \\
	\frac{1}{24}p^{2e}-\frac{1}{24} \ \text{ for } e \text{ odd}.
	\end{cases}
	\end{equation*}
\end{Example}

The following three examples are explicit applications of Corollary \ref{coroll_gcd1}.

\begin{Example}[$2$-dimensional Veronese ring]\label{Ex_2Veronese}
Let $R=\kk\ps{ x^n,x^{n-1}y,\cdots,xy^{n-1},y^n}$ be the $2$-dimensional $n$-th Veronese ring, with $n\geq 2$. In the notation of Section \ref{section:cyclicquotient}, this corresponds to the $\frac{1}{n}(1,1)$ cyclic quotient singularity. By direct computation, one can see that
\[
\theta_{\emptyset}^{(0)}(e)=\left|\left\{(a,b)\in([0,r_q-1] \cap \N)^2: \ a+b \equiv 0 \mod n \right\}\right|=\max\{1,2r_e-n+1\}.
\]
By Corollary \ref{coroll_gcd1}, the F-signature function of $R$ is then
\[
FS(e)=\frac{p^{2e}-r_e^2}{n}+\max\{1,2r_e-n+1\}.
\]
\end{Example}

\begin{Example}[singularity $A_{n-1}$] \label{Ex_A_{n-1}}
For $n \geq 2$, let $R=\kk\ps{x^n,xy,y^n}$ be a $2$-dimensional $A_{n-1}$-type singularity. In our notation, this corresponds to the $\frac{1}{n}(1,n-1)$ cyclic quotient singularity. In this case, we have 
\[
\theta_{\emptyset}^{(0)}(e)=\left|\left\{(a,b)\in([0,r_e-1] \cap \N)^2: \ a-b \equiv 0 \mod n \right\}\right|=r_e.
\]
It follows from Corollary \ref{coroll_gcd1} that the F-signature function of $R$ is 
\[
FS(e)=\frac{p^{2e}-r_e^2}{n}+r_e.
\]
This is in accordance with \cite{Brinkmann} (see also \cite[Example 4.3]{F-finExc}).
\end{Example}

\begin{Example}[$3$-dimensional Veronese ring]\label{Ex_3Veronese}
Let $R$ be the $3$-dimensional $n$-th Veronese ring, with $n\geq 2$. This corresponds to the $\frac{1}{n}(1,1,1)$ cyclic quotient singularity. We have 
\[
\theta_{\emptyset}^{(0)}(e)=\left|\left\{(a,b,c)\in([0,r_e-1] \cap \N)^3: \ a+b+c \equiv 0 \mod n \right\}\right|=\left|A_0\right|+\left|A_1\right|+\left|A_2\right|,
\]	
where $A_i=\left\{(a,b,c)\in([0,r_e-1] \cap \N)^3: \ a+b+c =i\cdot n \right\}$ for $i=0,1,2$.
We have $A_0=\{(0,0,0)\}$, and for $i=1,2$, $A_i\neq\emptyset$ if and only if $3(r_e-1)\geq i\cdot n$.
\\ We assume $3(r_e-1)\geq n$, and we compute $|A_1|$.
The number $|A_1|$ is equal to the number of ways we can place $3(r_e-1)-n$ objects in $3$ boxes, where each box can contain at most $r_e-1$ objects.
If $0\leq 3(r_e-1)-n\leq r_e-1$, this number is $|A_1|={{3r_e-n-1}\choose{2}}$.
If $r_e\leq3(r_e-1)-n\leq 2(r_e-1)$, it is $|A_1|={{3r_e-n-1}\choose{2}}-3{{2r_e-n-1}\choose{2}}$, where we have to subtract configurations where we put more than $r_e-1$ objects in one box.
We cannot have configurations where two boxes contain more than $r_e-1$ objects, since this would imply $3(r_e-1)-n\geq 2(r_e-1)+1$, which is equivalent to $r_e-2\geq n$; a contradiction, since $r_e<n$.
A similar reasoning yields $|A_2|={{3r_e-2n-1}\choose{2}}$ for $3(r_e-1)\geq 2n$.
Note that, in this case, there are no configurations with more than $r_e-1$ objects in one box, since this would mean that $3(r_e-1)-2n\geq r_e$, which is equivalent to $2r_e-2n-3\geq0$, again contradicting that $r_e<n$.
\\ Therefore, it follows from Corollary \ref{coroll_gcd1} that the F-signature function of $R$ is  
\[
FS(e)=\frac{p^{3e}-r_e^3}{n}+1+{{3r_e-n-1}\choose{2}}-3{{2r_e-n-1}\choose{2}}+{{3r_e-2n-1}\choose{2}},
\]
with the convention that a binomial coefficient ${{u}\choose{2}}=0$ whenever $u<2$.	
\end{Example}	
Our results allow us to compute several of examples of interest, such as the examples of Iyama and Yoshino from \cite{IyamaYoshino}. 

\begin{Example}[Iyama-Yoshino's singularities]
If $R$ is the $\frac{1}{3}(1,1,1)$ cyclic quotient singularity, then by Example \ref{Ex_3Veronese} the F-signature function of $R$ is
\[
FS(e) = 
 \frac{p^{3e}-1}{3} + 1  
\]
if $p\equiv1\mod 3$, and
\[
FS(e) = \left\{
\begin{array}{ll} \frac{p^{3e}-1}{3} + 1 & \mbox{ for } e \text{ even } \\ \\ \frac{p^{3e}-8}{3} + 1 & \mbox{ for } e \text{ odd} \end{array} \right. 
\]
if $p\equiv2\mod 3$.
On the other hand, for the cyclic quotient singularity $\frac{1}{2}(1,1,1,1)$, it follows from Corollary \ref{coroll_gcd1} and the fact that $r_e=1$ for all $e$ that the F-signature function is
\[
\ds FS(e) = \frac{p^{4e}-1}{2} + 1.
\]
\end{Example}

\par We conclude the paper providing one final example, the Klein four group embedded in ${\rm SL}(3,\kk)$. For the F-signature function of its ring of invariants, that turns out being rather easy to compute with our techniques, we see an example where the last coefficient $\varphi_0$ can be zero.
\begin{Example}[Klein four group]
Let $\kk$ be a field of prime characteristic $p\geq 3$. The Klein four group $\ZZ/(2) \times \ZZ/(2)$ can be realized as a subgroup of ${\rm SL}(3,\kk)$ with no $2$-pseudoreflections as follows:
\[G= 
\ds \left\{\left(\begin{matrix} 1 && \\ &1& \\ &&1 \end{matrix}\right), \left(\begin{matrix} 1 && \\ &-1& \\ &&-1 \end{matrix}\right),\left(\begin{matrix} -1 && \\ &1& \\ &&-1 \end{matrix}\right),\ \left(\begin{matrix} -1 && \\ &-1& \\ &&1 \end{matrix}\right)\right\},
\]
where the entries which are not listed should be treated as zeros. It can be shown that the ring of invariants under this action of $G$ is isomorphic to $\kk\ps{x^2,y^2,z^2,xyz}$. By Theorem~\ref{theorem-Fsignaturefunctionquotient}, because there are no $0$-pseudoreflections, the only coefficient in the F-signature function that we have to determine is $\varphi_1$. A straightforward computation gives $\varphi_1(e) = 3$ for all $e$, therefore
\[
\ds FS(e) = \frac{p^{3e}}{4} + \frac{3p^e}{4}.
\]
\end{Example}
\section*{Acknowledgments} Part of this work was realized when the authors were guests at the Universit\'{e} de Neuch\^{a}tel, and at the Royal Institute of Technology (KTH), in Stockholm. The authors thank these institutions for their hospitality.
The authors would like to thank Holger Brenner, Jack Jeffries, Yusuke Nakajima, Anurag Singh, Francesco Strazzanti, Peter Symonds, and Deniz Yesilyurt for many discussions and helpful comments.
In particular, we thank them for suggesting several of the examples in Section \ref{section:examples}, and for helping with the related computations.

\bibliographystyle{alpha}
\bibliography{References}
\end{document}